\newtheorem{thm}{Theorem}[section]
\newtheorem{lem}[thm]{Lemma}
\newtheorem{prop}[thm]{Proposition}
\theoremstyle{definition}
\newtheorem{defn}[thm]{Definition}
\newtheorem{rmk}[thm]{Remark}
\newtheorem*{ack}{Acknowledgments}
\newtheorem*{notation}{Notation and Conventions}
\DeclareMathOperator{\Aut}{Aut}
\DeclareMathOperator{\grp}{grp}
\DeclareMathOperator{\id}{id}
\DeclareMathOperator{\Ker}{Ker}
\DeclareMathOperator{\ord}{ord}
\DeclareMathOperator{\pr}{pr}
\DeclareMathOperator{\rord}{rord}
\DeclareMathOperator{\Sym}{Sym}
\DeclareMathOperator{\sHom}{\mathscr{H}\kern -.3pt \mathit{om}}
\newcommand{\vep}{\varepsilon}
\newcommand{\vph}{\varphi}
\newcommand{\bC}{\mathbb{C}}
\newcommand{\bP}{\mathbb{P}}
\newcommand{\bQ}{\mathbb{Q}}
\newcommand{\bZ}{\mathbb{Z}}
\newcommand{\cO}{\mathcal{O}}
\newcommand{\fS}{\mathfrak{S}}
\begin{document}

\title[The classification of smooth quotients of abelian surfaces]
{The classification of smooth quotients of abelian surfaces}

\author{Takahiro Shibata}
\address{National Fisheries University, Shimonoseki, Yamaguchi 759-6595, Japan}
\email{shibata@fish-u.ac.jp}
\keywords{quotients of abelian varieties, projective surfaces}
\subjclass[2020]{Primary 14K05, Secondary 14E20}

\maketitle

\begin{abstract}
We classify smooth projective surfaces 
that are quotients of abelian surfaces by finite groups.
\end{abstract}

\tableofcontents

\section{Introduction}\label{sec_intro}

Throughout this paper, we work over the complex number field $\bC$.
In this paper, we give the classification of smooth projective surfaces 
that are finite quotients of abelian surfaces, 
that is, quotients of abelian surfaces by finite groups.

Smooth finite quotients of abelian varieties are investigated in several papers.
Auffarth, Lucchini Arteche, and Quezada \cite{ALA20}, \cite{ALAQ22} studied when a smooth finite quotient of an abelian variety fixing the origin is isomorphic to a projective space.
Martinez-Nu\~{n}ez \cite{MN21} determined the precise structure of the smooth finite quotients of abelian varieties fixing the origin.
Lange \cite{Lan01} and Catenese--Demleitner \cite{CD20} completed the classification of hyperelliptic threefolds.
Here a hyperelliptic variety is a smooth projective variety that is not an abelian variety but a finite \'etale quotient of an abelian variety.

Recently, Auffarth and Luccini Arteche \cite[Theorem 1.3]{ALA22} proved that 
any smooth finite quotient of a complex torus (without the assumption of fixing the origin) is a fibration of a product of projective spaces over a complex torus or a hyperelliptic manifold (i.e.~a complex manifold that is not a complex torus but a finite \'etale quotient of a complex torus).
Our classification gives a precise picture of their result in dimension two and in the algeraic setting.

Yoshihara \cite{Yos95} gave a birational classification of finite quotients of abelian surfaces.
Our classification can be seen as a biregular classification of finite quotients of abelian surfaces,
restricted to smooth ones.

Our main result in this paper is the following: 

\begin{thm}\label{thm_main}
Let $X$ be a smooth projective surface.
Then $X$ is the quotient of an abelian surface by a finite group if and only if 
$X$ is isomorphic to one of the following:
\begin{itemize}
\item $\bP^2$.
\item $\bP^1 \times \bP^1$.
\item The projective bundle $\bP_E(\cO_E \oplus L)$ where $E$ is an elliptic curve and $L$ is a torsion line bundle of order at most $4$ on $E$.
\item The symmetric square $\Sym^2 E$ of an elliptic curve $E$.
\item Abelian surfaces.
\item Hyperelliptic surfaces.
\end{itemize}
\end{thm}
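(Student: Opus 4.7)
The ``if'' direction follows from explicit constructions. We have $\bP^1 \times \bP^1 \cong (E\times E)/(\langle -1\rangle \times \langle -1\rangle)$ and $\Sym^2 E \cong (E\times E)/\mathfrak{S}_2$. The surface $\bP^2$ arises as $(E_i\times E_i)/((\langle i\rangle\times \langle i\rangle)\rtimes \mathfrak{S}_2)$, using $E_i/\langle i\rangle \cong \bP^1$ and $(\bP^1\times \bP^1)/\mathfrak{S}_2\cong \Sym^2\bP^1\cong \bP^2$. For each bundle $\bP_E(\cO_E\oplus L)$ with $L$ torsion of order $n\in\{1,2,3,4\}$, I would take the connected cyclic étale cover $E'\to E$ of degree $n$ trivializing $L$ together with an elliptic curve $F$ whose automorphism group contains a $\bZ/n$ (so $F$ arbitrary for $n\le 2$, $F=E_\omega$ for $n=3$, $F=E_i$ for $n=4$), and let $\bZ/n$ act on $E'\times F$ diagonally by a Galois generator on $E'$ and a primitive $n$-th root of unity on $F$; the quotient is isomorphic to $\bP_E(\cO_E\oplus L)$. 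Abelian and hyperelliptic surfaces are quotients of abelian surfaces by definition.

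For the converse direction, I would invoke the theorem of Auffarth and Lucchini Arteche \cite{ALA22}, which asserts that any smooth finite quotient of a complex torus is a fibration, over a complex torus or hyperelliptic manifold, whose fibres are products of projective spaces. In dimension two, the admissible shapes correspond to the splitting $2=\dim(\text{base})+\dim(\text{fibre})$: either (i) the base is a point and $X\in\{\bP^2,\bP^1\times\bP^1\}$; or (ii) the fibres are points and $X$ is an abelian surface or a hyperelliptic surface (algebraic, by projectivity); or (iii) $X$ is a $\bP^1$-bundle over an elliptic curve. Shapes (i) and (ii) are already on the list, so all further work concerns case (iii).

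For case (iii), let $f:X\to E$ be the bundle projection, $p:A\to X$ the quotient, and $\pi=f\circ p:A\to E$. The general fibre of $\pi$ is a connected curve of arithmetic genus $\le 1$ inside the abelian surface $A$, hence a translate of an elliptic subgroup $F\subset A$; consequently $A$ is isogenous to $F\times E'$ where $E'=A/F$ is an elliptic curve isogenous to $E$. Replacing $A$ by $F\times E'$ and adjusting $G$ accordingly (which does not alter the quotient problem), the group $G$ preserves the two factors: it acts on $E'$ by translations in the kernel of $E'\to E$, and on $F$ through a finite subgroup $H\subset\Aut_{\mathrm{aff}}(F)$ with $F/H\cong\bP^1$. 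The last condition forces $H$ to contain an automorphism whose linear part has order $n\in\{2,3,4,6\}$. The isomorphism class of $X\to E$ is then determined by the combined data of the translation component on $E'$, the isogeny $E'\to E$, and the order-$n$ action on $F$. Two sub-cases arise: if $G$ contains a reflection that swaps the two factors (possible only when $A$ is isogenous to $E\times E$ in a symmetric fashion), then $X\cong \Sym^2 E$; otherwise the reflections preserve the factorisation and $X\cong \bP_E(\cO_E\oplus L)$ for some torsion $L\in\Pic^0(E)$.

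The principal technical obstacle is bounding $\ord L\le 4$. Using the restricted classification of elliptic-curve automorphism groups ($E_i$ is the unique elliptic curve with an order-$4$ automorphism and $E_\omega$ the unique one with an order-$3$ or order-$6$ automorphism), together with the Chevalley--Shephard--Todd criterion (smoothness of $A/G$ is equivalent to each isotropy being a pseudoreflection group), a direct case analysis on the pair $(F,H)$ and the lift of $\Gal(E'/E)$ to $G$ shows that the minimal étale cyclic cover of $E$ trivializing $L$ has degree dividing $4$. The seemingly natural order-$6$ construction with $F=E_\omega$ and an order-$6$ automorphism is excluded: any such configuration either gives a free action, producing a hyperelliptic surface rather than a $\bP^1$-bundle, or, after modding out the reflection subgroup generated by its order-$2$ and order-$3$ parts, reduces to a bundle whose twist has order in $\{1,2,3\}$. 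Matching each surviving configuration with one of the listed surfaces then completes the classification.
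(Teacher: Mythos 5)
Your overall architecture (trichotomy from the Auffarth--Lucchini Arteche structure theorem, reduction of the $\bP^1$-bundle case to a quotient of a product $E'\times F$ of elliptic curves) matches the paper, and your constructions of $\bP^2$, $\bP^1\times\bP^1$ and $\Sym^2 E$ are fine. But there is a concrete error in your ``if'' direction. The diagonal $\bZ/n$-action on $E'\times F$ that you propose --- a Galois translation on $E'$ and a primitive $n$-th root of unity on $F$ --- is \emph{free}, because its $E'$-component is a fixed-point-free translation; hence the quotient is a finite \'etale quotient of an abelian surface, i.e.\ an abelian or bielliptic surface, and certainly not $\bP_E(\cO_E\oplus L)$. (For $n=2$ your recipe literally produces the classical bielliptic surface $(E'\times F)/\langle t_a\times[-1]\rangle$.) To collapse the $F$-direction to $\bP^1$ the group must contain an element of the form $1\times h$ with $h$ a non-translation, which forces a strictly larger group: the paper takes $\langle t_a\times\vph\rangle$ with $\ord(\vph)=2n$ for $n=2,3$, and $\bZ/2\times\bZ/4$ generated by $t_a\times t_b$ and $t_{a_0}\times\vph$ (with $2a_0=a$, $\ord(b)=2$, $\ord(\vph)=4$) for $n=4$. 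Exhibiting these configurations, and matching them with the twisted bundle via the uniqueness of the faithful $\bZ/n$-action on $\bP^1$, is precisely the non-trivial content of the realization step; your version skips it and the object you construct is the wrong surface.

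The second, and larger, gap is in the converse: the bound $\ord L\le 4$, which you yourself call the principal obstacle, is asserted rather than proved (``a direct case analysis \dots shows''). The actual argument has real content: one first shows, via Chevalley--Shephard--Todd at a fixed point, that every element of $G$ acts on the $E'$-factor by a translation; then that $G$ modulo its translation subgroup $\Delta$ is cyclic of order $r\in\{2,3,4,6\}$; then one bounds $\Delta$ by combining two facts --- the second components of elements of $\Delta$ must be fixed points of the order-$r$ automorphism of $F$ (of which there are only $4,3,2,1$ for $r=2,3,4,6$), and $\Delta$ embeds into $E[s]$ --- to conclude $\Delta\in\{1,\bZ/2,(\bZ/2)^2,\bZ/3\}$ compatibly with $r$; finally one checks that the group acting on $E'\times\bP^1$ after killing $N=\Ker(G\to\Aut(E'))$ is one of $\bZ/2,\bZ/3,\bZ/4,(\bZ/2)^2$ (the $r=6$ case survives only with $\Delta=1$ and then $N\ne 1$ forces the induced translation on $E'$ to have order at most $3$). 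None of this appears in your proposal, so the exclusion of order $5$, order $6$, and of groups such as $(\bZ/3)^2$ or $\bZ/2\times\bZ/4$ acting by translations on the base is not established. A smaller point: you first assert that $G$ preserves the two factors and acts on $E'$ only by translations, and then invoke ``a reflection that swaps the two factors'' to produce $\Sym^2 E$; these are incompatible. In the factor-preserving normal form, $\Sym^2 E$ arises as the $(\bZ/2)^2$ case of $(E\times\bP^1)/G$ with $G$ acting by $2$-torsion translations on $E$, not from a factor swap.
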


In Section \ref{sec_prelim}, we give notation and lemmas needed.
In Section \ref{sec_proof}, we give a proof of Theorem \ref{thm_main}.
The proof is divided by augmented irregularity (for definition of augmented irregularity, see Definition \ref{defn_qcirc}).

\begin{ack}
I thank Yohsuke Matsuzawa and Masataka Iwai for discussions and comments.
\end{ack}

\section{Preliminaries}\label{sec_prelim}

\begin{notation}
\begin{itemize}
$ \, $

\item For a normal projective variety $X$,
$q(X)=h^1(X, \mathcal O_X)$ denotes the \textit{irregularity} of $X$.


\item For a variety $X$, $\Aut(X)$ denotes the automorphism group of $X$.
When $X$ is an algebraic group, $\Aut_{\grp}(X)$ denotes the group of the automorphisms of algebraic group.

\item When considering an abelian variety or an elliptic curve, we assume that its group structure is fixed.

\item For an abelian variety $A$ and $n \in \bZ$,
we define a morphism $[n]: A \to A$ by $[n](x)=nx$.

\item For an abelian variety $A$ and $n \in \bZ_{\ge 1}$,
$A[n]$ denotes the set of $n$-torsion points on $A$.

\item For an abelian variety $A$ and a point $a \in A$,
$t_a: A \to A$ denotes the translation map by $a$.

\item Let $A$ be an abelian variety and $g: A \to A$ a morphism.
Then we can write $g=t_a \circ \phi$ with $\phi: A \to A$ being a group homomorphism.
We call $\phi$ the \textit{holonomy part} of $g$.

\item Given a group $G < \Aut(A)$, the set $G_0 \subset \Aut_{\grp}(A)$ of the holonomy parts of the elements of $G$ forms a group.
We call $G_0$ the \textit{holonomy part} of $G$.

\end{itemize}

\end{notation}

\begin{defn}[cf.~{\cite[p.~27]{DPS94}}, {\cite[Definition 2.6]{NZ10}}]\label{defn_qcirc}
Let $X$ be a normal projective variety.
We define the \textit{augmented irregularity} of $X$: 
$$q^\circ(X)=\max_Y q(Y),$$
where $Y$ is taken to be all normal projective varieties which are finite quasi\'etale covers of $X$.
\end{defn}

We use the following structure theorem on finite quotients of abelian varieties.

\begin{thm}[cf.~{\cite[Theorem 1.1]{DHP08}}, {\cite[Theorem 1.2 and Proposition 4.5]{Shi21}}]\label{thm_decomp}
Let $X$ be a finite quotient of an abelian variety.
Then there exists a finite quasi\'etale cover $\theta: B \times F \to X$ such that  
$B$ is an abelian variety and $F$ is a finite quotient of an abelian variety that is $\bQ$-Fano.
In this situation, we have $q^\circ(X)=\dim B$ and $\kappa(-K_X)=\dim F$.
\end{thm}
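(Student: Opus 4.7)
The plan is to decompose the $G$-action on $A$ (writing $X = A/G$ with $A$ an abelian variety and $G \subset \Aut(A)$ a finite group) by separating its translational and holonomy parts, then use equivariant Poincar\'e reducibility to split off the abelian factor. Start from the exact sequence $1 \to T \to G \to G_0 \to 1$, where $T = G \cap A$ is the translation subgroup of $G$ and $G_0 \subset \Aut_{\grp}(A)$ is the holonomy part, and decompose the $G_0$-representation on $V := T_0 A$ as $V = V^{G_0} \oplus V'$ into its invariant and complementary subspaces (available in characteristic zero).

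Next I would produce $G_0$-stable abelian subvarieties $B_0, A' \subset A$ with tangent spaces $V^{G_0}$ and $V'$ respectively, satisfying $B_0 + A' = A$ and $B_0 \cap A'$ finite. These exist by a $G_0$-equivariant Poincar\'e complete reducibility, possibly after passing to an isogeny of $A$. Pulling the $G$-action back to $B_0 \times A'$ and enlarging the group to absorb $T$, I obtain a finite quasi\'etale cover $B_0 \times A' \to X$ carrying a product action $H_B \times H_{A'}$, where $H_B$ acts on $B_0$ purely by translations and $H_{A'}$ acts linearly on $A'$ with $(T_0 A')^{H_{A'}} = 0$. Setting $B := B_0/H_B$ (still abelian, as a translation quotient of an abelian variety) and $F := A'/H_{A'}$, the construction yields the desired quasi\'etale cover $\theta: B \times F \to X$.

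The delicate step is showing $F$ is $\bQ$-Fano. The ramification formula $K_{A'} = \pi^* K_F + R$ combined with $K_{A'} \equiv 0$ gives $\pi^*(-K_F) = R$, where $R$ is the sum of divisorial fixed loci of pseudo-reflections in $H_{A'}$, each weighted by $(e-1)$. One then argues that the tangent hypothesis $(T_0 A')^{H_{A'}} = 0$, together with the maximality built into the decomposition (so that $H_{A'}$ has no free factor that would otherwise produce an \'etale abelian quotient to be absorbed into $B$), forces $R$ to be ample on $A'$ and hence $-K_F$ to be ample on $F$. The two numerical identities then follow from the product structure: $q(B \times F) = q(B) + q(F) = \dim B$, since $q(F) = 0$ for a $\bQ$-Fano variety by Kawamata--Viehweg vanishing on a resolution, while $-K_{B \times F} = \cO_{B} \boxtimes (-K_F)$ has Iitaka dimension $\dim F$ by the ampleness of $-K_F$; the maximality statement $q^\circ(X) = \dim B$ uses that any further quasi\'etale cover gains no new irregularity from the Fano factor.

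The main obstacle is the $\bQ$-Fano assertion, which is where the content of the theorem really lies: one must verify that $H_{A'}$ acts on $A'$ in a sufficiently non-free manner that its ramification divisor is ample. The a priori pitfall is that $H_{A'}$ could contain elements acting freely on $A'$, producing an \'etale quotient that should morally belong to the abelian factor $B$ rather than to $F$. The proof must therefore construct the decomposition so as to maximize the abelian part and leave a genuinely Fano residue — this is precisely why the statement invokes augmented irregularity rather than irregularity, and why one may have to iterate the splitting or pass to additional finite covers in order to land on the clean product form $B \times F \to X$.
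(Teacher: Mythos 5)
The paper does not actually prove Theorem~\ref{thm_decomp}: it is quoted as a known result from \cite{DHP08} and \cite{Shi21}, so there is no internal proof to compare against. Judged on its own terms, your sketch has a genuine gap at its very first step. You propose to split $V=T_0A$ as $V^{G_0}\oplus V'$ and to take the abelian factor $B_0$ to be the subvariety with tangent space $V^{G_0}$, the fixed subspace of the holonomy representation. This is not the right decomposition. Take $A=E_1\times E_2$ and $G=\langle [-1]\times[-1]\rangle$ acting diagonally: here $V^{G_0}=0$, so your recipe gives $B_0=0$ and $F=X=A/G$, the singular Kummer surface, which has $K_F\equiv 0$ and is not $\bQ$-Fano. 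The correct output of the theorem in this example is $B=A$ and $F$ a point, because the quotient map $A\to X$ is itself quasi\'etale (the fixed locus is isolated points). The abelian factor is governed by the augmented irregularity, which absorbs not only directions in which $G$ acts by translations but also all directions in which the action is free in codimension one; the invariant subspace $V^{G_0}$ sees only the former. Consequently your claim that $(T_0A')^{H_{A'}}=0$ ``forces $R$ to be ample'' is false as stated --- in the example above $R=0$ --- and the phrase ``the maximality built into the decomposition'' is carrying the entire content of the theorem without any construction behind it. This is exactly the part that \cite[Proposition 4.5]{Shi21} is cited for.

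There is also a directional problem in your construction. After passing to $B_0\times A'$ and to the product group $H_B\times H_{A'}$ containing the pulled-back action $\tilde G$, the natural morphism goes $X\cong (B_0\times A')/\tilde G\to (B_0\times A')/(H_B\times H_{A'})=B\times F$, i.e.\ \emph{from} $X$ \emph{to} the product, whereas the theorem requires a quasi\'etale cover $B\times F\to X$. To get a cover of $X$ one must instead quotient $B_0\times A'$ by a suitable subgroup of $\tilde G$ (e.g.\ the elements acting trivially on the first factor) and then verify that the residual quotient map down to $X$ is \'etale in codimension one --- which again is precisely where the non-freeness analysis you deferred must actually be carried out. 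The numerical identities at the end are comparatively harmless (one does need $q^\circ(F)=0$, not just $q(F)=0$, but that follows from the same ramification argument applied to quasi\'etale covers of $F$), so the proposal's real deficit is that the splitting itself is wrong and the $\bQ$-Fano assertion is assumed rather than proved.
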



From here to the end of this section, we prepare some lemmas needed for the proof of Theorem \ref{thm_main}.

\begin{lem}\label{lem_min}
Let $X$ be a smooth finite quotient of an abelian surface.
Then $X$ is a minimal surface i.e.~$X$ has no $(-1)$-curve.
\end{lem}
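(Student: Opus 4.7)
The plan is to argue by contradiction: assume $C \subset X$ is a $(-1)$-curve and derive an impossible intersection number by pulling $C$ back to an abelian surface covering $X$.

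First I would fix a finite group action $G \curvearrowright A$ with $A/G \cong X$ and let $\pi : A \to X$ be the quotient. Since $X$ is smooth, $C$ is Cartier on $X$, so $\pi^{*}C$ is a well-defined effective Cartier divisor on $A$. The projection formula together with $\pi_{*}\pi^{*}C = (\deg \pi)\, C$ yields
$$(\pi^{*}C)^{2} = (\deg \pi)\, C^{2} = -\deg \pi < 0.$$

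Next I would show that every nonzero effective divisor on the abelian surface $A$ has nonnegative self-intersection, contradicting the display above. Write $\pi^{*}C = \sum_{i} m_{i}\tilde{C}_{i}$ with $m_{i} > 0$ and $\tilde{C}_{i}$ pairwise distinct irreducible curves. Cross terms satisfy $\tilde{C}_{i}\cdot\tilde{C}_{j} \ge 0$ for $i\ne j$ since these are distinct irreducible curves. For the diagonal terms, every morphism $\bP^{1}\to A$ is constant, so no $\tilde{C}_{i}$ is rational; hence $p_{a}(\tilde{C}_{i}) \ge 1$, and adjunction with $K_{A}=0$ gives $\tilde{C}_{i}^{2} = 2p_{a}(\tilde{C}_{i}) - 2 \ge 0$. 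Expanding
$$(\pi^{*}C)^{2} = \sum_{i} m_{i}^{2}\tilde{C}_{i}^{2} + 2\sum_{i<j} m_{i}m_{j}\tilde{C}_{i}\cdot\tilde{C}_{j}$$
then gives $(\pi^{*}C)^{2} \ge 0$, contradicting the computation above.

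I do not anticipate a serious obstacle. The only point needing mild care is that $\pi$ may be ramified in codimension one (which is compatible with $X$ smooth, via Chevalley--Shephard--Todd), but the Cartier pullback $\pi^{*}C$ and the intersection identity $(\pi^{*}D_{1}\cdot\pi^{*}D_{2})_{A} = (\deg \pi)(D_{1}\cdot D_{2})_{X}$ remain valid for any finite surjective morphism between smooth surfaces, so the two steps combine without any technical snag.
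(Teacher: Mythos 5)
Your proof is correct and follows essentially the same route as the paper: pull the negative curve back under the quotient map, compute $(\pi^*C)^2=(\deg\pi)C^2<0$, and contradict the fact that effective divisors on an abelian surface have nonnegative self-intersection. The only difference is that you prove this last fact from scratch (via adjunction with $K_A=0$ and the absence of rational curves on $A$), whereas the paper simply invokes that effective divisors on an abelian surface are nef.
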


\begin{proof}
Let $\pi: A \to X$ be the quotient map from an abelian surface $A$.
If $X$ has an effective divisor $D$ with negative self-intersection, then 
so is $\pi^*D$.
But any effective divisor on $A$ is nef, so it is a contradiction.
Hence $X$ has no $(-1)$-curve.
\end{proof}

\begin{lem}\label{lem_persubvar}
Let $A$ be an abelian variety and $G$ a finite subgroup of $\Aut A$.
Assume that there exists an abelian subvariety $B \subset A$ 
such that $g(B)$ is a translate of $B$ for any $g \in G$.
Then there exists another abelian subvariety $C \subset A$ such that 
the addition map $\mu: B \times C \to A$ is an isogeny and 
there exists a finite subgroup $\tilde{G}$ of $\Aut(B \times C)$ such that 
$A/G \cong (B \times C)/\tilde{G}$.
\end{lem}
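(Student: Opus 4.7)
The plan is to first find a complementary abelian subvariety $C\subset A$ that is preserved by the holonomy part of $G$, and then to lift the $G$-action to a finite group $\tilde G\subset\Aut(B\times C)$ whose quotient equals $A/G$. For the setup, write each $g\in G$ as $g=t_{a_g}\circ\phi_g$. Since $g(B)=\phi_g(B)+a_g$ is a translate of $B$ and $0=\phi_g(0)\in\phi_g(B)$, we must have $\phi_g(B)=B$; hence the holonomy part $G_0\subset\Aut_{\grp}(A)$ is a finite subgroup preserving $B$ setwise.

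The first step is a $G_0$-equivariant Poincar\'e reducibility. I would average any polarization on $A$ over the finite group $G_0$ to produce a $G_0$-invariant polarization $L$ (a sum of ample classes is ample), and then let $C\subset A$ be the identity component of the kernel of the composition $A\xrightarrow{\lambda_L}A^\vee\twoheadrightarrow B^\vee$, where $A^\vee\to B^\vee$ is dual to $B\hookrightarrow A$. A short diagram chase, using that $\phi^*L=L$ and $\phi(B)=B$ for every $\phi\in G_0$, shows this kernel is $G_0$-stable, so each $\phi_g$ restricts to automorphisms of both $B$ and $C$. Moreover $B\cap C$ is finite and $B+C=A$, so the addition map $\mu\colon B\times C\to A$ is an isogeny with finite kernel $K=\{(b,-b):b\in B\cap C\}$.

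The second step is to lift each element of $G$. For $g=t_{a_g}\phi_g\in G$, choose any decomposition $a_g=b_g+c_g$ with $b_g\in B$ and $c_g\in C$ (possible since $\mu$ is surjective, unique only up to $K$), and define
\[
\tilde g\colon B\times C\to B\times C,\qquad(b,c)\mapsto(\phi_g(b)+b_g,\,\phi_g(c)+c_g).
\]
This is an automorphism of $B\times C$ satisfying $\mu\circ\tilde g=g\circ\mu$. I then define $\tilde G$ to be the set of all automorphisms $h$ of $B\times C$ with $\mu\circ h=g\circ\mu$ for some $g\in G$, i.e.\ the union over $g\in G$ of all lifts of $g$. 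The composition of a lift of $g_1$ with a lift of $g_2$ is a lift of $g_1g_2$, and the lifts of $\id_A$ are precisely the translations by $K$, so $\tilde G$ is a group fitting in a short exact sequence $1\to K\to\tilde G\to G\to 1$; in particular $\tilde G$ is finite.

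Finally, $\mu$ realises $A$ as $(B\times C)/K$, and the induced action of $\tilde G/K$ on $A$ is precisely the original $G$-action, so $(B\times C)/\tilde G\cong A/G$. The step I expect to require the most care is the $G_0$-equivariant Poincar\'e reducibility in the first paragraph: one must check that averaging preserves ampleness and that the orthogonal-complement construction genuinely commutes with the $G_0$-action. Once this is established, the subsequent lifting and the final quotient identification are essentially formal bookkeeping.
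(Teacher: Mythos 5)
Your proof is correct and follows essentially the same route as the paper: both find a $G_0$-stable complement $C$ (the paper cites \cite[Lemma 2.14]{Shi21} for this, where you reprove it by averaging a polarization) and then lift $G$ to the same finite group $\tilde G$ of product-form automorphisms of $B\times C$. The only presentational difference is at the end, where the paper checks directly that $\overline{\mu}\colon (B\times C)/\tilde G\to A/G$ is injective on points, while you package the same fact via the extension $1\to K\to\tilde G\to G\to 1$ and the identification $A=(B\times C)/K$; both are valid.
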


\begin{proof}
Let $G_0$ be the holonomy part of $G$, then $B$ is $G_0$-invariant.
There exists a $G_0$-invariant abelian subvariety $C \subset A$ 
such that the addition map $\mu: B \times C \to A$ is an isogeny 
(cf.~\cite[Lemma 2.14]{Shi21}).
Set 
$$\tilde{G}=\{(t_b \circ \vph|_B) \times (t_c \circ \vph|_C) \mid b \in B,\ c \in C,\ \vph \in G_0,\ t_{b+c} \circ \vph \in G \} \subset \Aut(B \times C).$$

We have the following diagram:
$$
\xymatrix{
B \times C \ar[r]^{p} \ar[d]_{\mu} & (B \times C)/\tilde{G} \ar[d]^{\overline{\mu}}  \\
A \ar[r]^q & A/G
}
$$
Take any two points $p(y,z), p(y',z') \in (B \times C)/\tilde{G}$ with 
$\overline{\mu}(p(y,z))=\overline{\mu}(p(y',z'))$.
Then $q(y+z)=q(y'+z')$, so 
there exists $t_a \circ \vph \in G$ ($\vph \in G_0$) such that 
$$y'+z'=(t_a \circ \vph)(y+z)=\vph(y)+\vph(z)+a.$$
Take $b \in B,\ c \in C$ such that $a=b+c$.
Then $y'+z'=\vph(y)+\vph(z)+b+c$.
Set 
$$d=\vph(y)-y'+b \ (=z'-\vph(z)-c).$$
Then $d \in B \cap C$, so 
$\tilde{g}=(t_{b-d}\circ \vph|_B) \times (t_{c+d} \circ \vph|_C) \in \tilde{G}$.
We have 
\begin{align*}
\tilde{g}(y,z)
&=(\vph(y)+b-d, \vph(z)+c+d) 
=(y', z').
\end{align*}
So $p(y,z)=p(y',z')$.
Therefore $\overline{\mu}$ is an isomorphism.
\end{proof}

\begin{lem}\label{lem_cancel}
Let $E$ be an elliptic curve and $\vph, \psi$ group automorphisms of the same order on $E$.
Then $\psi=\vph$ or $\psi=\vph^{-1}$.
\end{lem}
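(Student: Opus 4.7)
The plan is to reduce the lemma to an elementary statement about cyclic groups by invoking the classical classification of the automorphism group of an elliptic curve as an algebraic group. Recall that $\Aut_{\grp}(E)$ is a finite cyclic group whose order belongs to $\{2, 4, 6\}$: it equals $\bZ/2\bZ$ for generic $E$, and is $\bZ/4\bZ$ or $\bZ/6\bZ$ only when $j(E)=1728$ or $j(E)=0$ respectively. Once this is in hand, the lemma becomes a short combinatorial check.

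First I would observe that $\psi$ lies in the cyclic subgroup generated by $\vph$. Indeed, since $\Aut_{\grp}(E)$ is cyclic, for each divisor $d$ of its order there is a \emph{unique} subgroup of order $d$; the subgroup of order $k:=\ord(\vph)$ is $\langle \vph\rangle$, and $\psi$ is forced into it because $\ord(\psi)=k$. Consequently I can write $\psi=\vph^{j}$ for some integer $j$ with $0\le j<k$ and $\gcd(j,k)=1$.

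Next I would conclude by inspecting the possible values of $k$. Since $k$ divides the order of $\Aut_{\grp}(E)$, we have $k\in\{1,2,3,4,6\}$, and Euler's totient satisfies $\varphi_{\text{Euler}}(k)\le 2$ in every one of these cases. Thus the only integers $j\in\{0,\dots,k-1\}$ coprime to $k$ are $j=1$ and $j=k-1$, giving respectively $\psi=\vph$ and $\psi=\vph^{-1}$.

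There is no real obstacle here; the only non-elementary input is the classification of $\Aut_{\grp}(E)$, which is standard (see e.g.\ Silverman's \emph{Arithmetic of Elliptic Curves}, III.10). If one prefers to avoid quoting it, one can derive cyclicity by noting that $\Aut_{\grp}(E)$ injects into $\GL_2(\bZ)$ via its action on $H_1(E,\bZ)$ and preserves the Hodge decomposition, hence injects into $\bC^{\times}$ as a finite subgroup, which is automatically cyclic.
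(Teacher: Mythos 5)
Your proposal is correct and follows essentially the same route as the paper: both rest on the classification of $\Aut_{\grp}(E)$ as cyclic of order $2$, $4$, or $6$, after which the paper simply declares the verification easy. Your write-up usefully spells out that verification (uniqueness of the subgroup of each order in a cyclic group, plus $\varphi_{\mathrm{Euler}}(k)\le 2$ for $k\in\{1,2,3,4,6\}$), but it is the same argument in substance.
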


\begin{proof}
It is well-known that $\Aut_{\grp}(E)$ is isomorphic to 
$\bZ/2$, $\bZ/4$ or $\bZ/6$.
Then the assertion is easy to verify.
\end{proof}

\begin{lem}\label{lem_power}
Let $E$ be an elliptic curve and $\vph$ a group automorphism on $E$ with $\ord(\vph)=r \ge 2$.
Then $1+\vph+\vph^2+\cdots+\vph^{r-1}=0$.
\end{lem}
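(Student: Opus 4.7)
The plan is to view the relation $1+\varphi+\varphi^2+\cdots+\varphi^{r-1}=0$ as an identity in the endomorphism ring $\End(E)$ (where addition is induced by the group law on $E$), and to derive it from the factorization of $X^r-1$ in $\bZ[X]$.

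Concretely, set $\Phi=1+\varphi+\varphi^2+\cdots+\varphi^{r-1}\in\End(E)$. The first step is the observation that
\begin{equation*}
(\varphi-\id_E)\circ \Phi \;=\; \varphi^r-\id_E \;=\; 0,
\end{equation*}
where the last equality uses the hypothesis $\ord(\varphi)=r$. The second step is to note that since $r\ge 2$, the automorphism $\varphi$ is not the identity, so $\varphi-\id_E$ is a nonzero endomorphism of the elliptic curve $E$, hence an isogeny with finite kernel.

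The third step is then to combine the two: the image $\Phi(E)$ is contained in $\ker(\varphi-\id_E)$, which is finite. But $\Phi$ is a morphism of algebraic groups (as a sum of group homomorphisms), so $\Phi(E)$ is an irreducible subvariety of $E$ containing $0$; being finite, it must equal $\{0\}$, i.e.\ $\Phi=0$.

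I don't expect any real obstacle here; the only point to be careful about is that we are working in the (noncommutative in general) ring $\End(E)$, so the factorization $\varphi^r-1=(\varphi-1)(1+\varphi+\cdots+\varphi^{r-1})$ has to be checked by direct telescoping, but this works verbatim because $\varphi$ commutes with its own powers. Everything else is immediate from the fact that nonzero endomorphisms of an elliptic curve are isogenies.
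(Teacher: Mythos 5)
Your proof is correct and follows essentially the same route as the paper: factor $1-\vph^r=(1-\vph)(1+\vph+\cdots+\vph^{r-1})$ and cancel the nonzero factor $1-\vph$. The only difference is that you make explicit the justification for the cancellation (the image of $\Phi$ lands in the finite kernel of the isogeny $\vph-1$ and is connected), which the paper leaves implicit via the fact that $\End(E)$ has no zero divisors.
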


\begin{proof}
We have 
$0=1-\vph^r=(1-\vph)(1+\vph+\vph^2+\cdots+\vph^{r-1})$. 
Now $\vph \neq 1$, so the above equality implies $1+\vph+\vph^2+\cdots+\vph^{r-1}=0$.
\end{proof}

\begin{lem}\label{lem_table}
Let $E$ be an elliptic curve and $\vph \neq 1$ a group automorphism on $E$.
Then we have the following table:
\begin{center}
\begin{tabular}{|l||c|c|c|c|} \hline
The order of $\vph$ & $2$ & $3$ & $4$ & $6$ \\ \hline 
The number of fixed points of $\vph$ & $4$ & $3$ & $2$ & $1$ \\ \hline
\end{tabular}
\end{center}
\end{lem}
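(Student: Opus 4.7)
The plan is to reduce the count to the degree of the isogeny $1 - \varphi \in \End(E)$: since $\varphi \ne 1$, the endomorphism $1-\varphi$ is nonzero, hence an isogeny, and the fixed-point set of $\varphi$ coincides with its kernel. Thus the number of fixed points equals $\deg(1-\varphi)$.

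To compute this degree, I would work in $\End(E)$ equipped with the dual-isogeny involution $\alpha \mapsto \hat{\alpha}$; any group automorphism $\psi$ satisfies $\hat{\psi} = \psi^{-1}$ (since $\deg \psi = 1$), so
\begin{equation*}
\deg(1-\varphi) = (1-\varphi)(1-\varphi^{-1}) = 2 - (\varphi + \varphi^{-1}),
\end{equation*}
and the problem reduces to evaluating the integer $\varphi + \varphi^{-1}$ in each case. An entirely parallel route, avoiding dual isogenies, is to write $E = \bC/\Lambda$ with $\varphi$ acting as multiplication by some root of unity $\alpha$ preserving $\Lambda$, identify $\Fix(\varphi)$ with $(1-\alpha)^{-1}\Lambda/\Lambda$, and compute its cardinality as $|\det_{\bR}(1-\alpha)| = |1-\alpha|^2$.

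It then remains to evaluate each of the four cases, using that $\Aut_{\grp}(E)$ is cyclic of order $2$, $4$, or $6$. For $r=2$: the unique element of order $2$ is $[-1]$, so $\varphi = [-1]$ and $1-\varphi = [2]$ has degree $4$. For $r=3$: Lemma~\ref{lem_power} gives $1 + \varphi + \varphi^2 = 0$ and $\varphi^{-1} = \varphi^2$, whence $\varphi + \varphi^{-1} = -1$ and the degree is $3$. For $r=4$: the element $\varphi^2$ has order $2$, so $\varphi^2 = [-1]$ and $\varphi^{-1} = -\varphi$, giving $\varphi + \varphi^{-1} = 0$ and degree $2$. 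For $r=6$: one has $\varphi^3 = [-1]$ together with $1 + \varphi^2 + \varphi^4 = 0$ (applying Lemma~\ref{lem_power} to $\varphi^2$); substituting $\varphi^4 = \varphi \cdot \varphi^3 = -\varphi$ gives $\varphi - \varphi^2 = 1$, and $\varphi^{-1} = \varphi^5 = -\varphi^2$ yields $\varphi + \varphi^{-1} = \varphi - \varphi^2 = 1$, so the degree is $1$.

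The only step requiring a moment of care is the small manipulation in the $r=6$ case; everything else is immediate from Lemma~\ref{lem_power} and the cyclic structure of $\Aut_{\grp}(E)$, and I do not foresee any serious obstacle.
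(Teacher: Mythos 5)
Your proof is correct. Both you and the paper begin from the same key identification---the fixed points of $\varphi$ form the kernel of the isogeny $1-\varphi$, so their number is $\deg(1-\varphi)$---but you then diverge in how the degree is computed. The paper passes to the analytic representation $\alpha\in\{-1,\omega,i,-\omega\}$ and evaluates $|\alpha-1|^2$ numerically (this is exactly the ``parallel route'' you sketch in passing). Your primary computation stays inside $\End(E)$: using $\widehat{\varphi}=\varphi^{-1}$ you get $\deg(1-\varphi)=2-(\varphi+\varphi^{-1})$ and then evaluate the trace case by case from Lemma~\ref{lem_power} and the cyclic structure of $\Aut_{\grp}(E)$; I checked all four cases, including the slightly fiddly $r=6$ manipulation, and they are right. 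The algebraic route has the mild advantage of not choosing a lattice presentation or a specific root of unity (and would transfer to positive characteristic, where $1-\varphi$ is still separable when $\varphi$ has order prime to $p$), at the cost of invoking the dual-isogeny formalism; the paper's analytic computation is shorter given that everything here is over $\bC$.
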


\begin{proof}
The number of fixed points of $\vph$ is equal to $\deg(\vph-1)$,
which is computed as $|\alpha-1|^2$, where $\alpha \in \bC$ is the analytic representation of $\vph$.
Now $\alpha=-1, \omega, i, -\omega$ when the order of $\vph$ is $2, 3, 4, 6$, respectively.
So the assertion follows from easy computations.
\end{proof}

\begin{lem}\label{lem_split}
Let $E$ be an elliptic curve.
Then any automorphism on $E \times \bP^1$ is of the form $\vph \times \psi$ ($\vph \in \Aut(E),\ \psi \in \Aut(\bP^1)$).
\end{lem}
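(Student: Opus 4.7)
The plan is to use the Albanese map of $E \times \bP^1$ to pin down the first factor, and then use the affineness of $\Aut(\bP^1)$ to pin down the second factor.

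First, I would note that the Albanese variety of $E \times \bP^1$ is $E$ itself (since $\bP^1$ has trivial Albanese), and the Albanese map is the first projection $p_1 \colon E \times \bP^1 \to E$. Because the Albanese is functorial, any automorphism $f \in \Aut(E \times \bP^1)$ descends to an automorphism $\varphi \in \Aut(E)$ making the square with $p_1$ commute. In particular, $f$ sends each fiber $\{e\} \times \bP^1$ isomorphically onto $\{\varphi(e)\} \times \bP^1$, so after composing with $\varphi^{-1} \times \id_{\bP^1}$ on the target, we may write
\[
f(e, t) = \bigl(\varphi(e),\, g_e(t)\bigr)
\]
for some family of automorphisms $g_e \in \Aut(\bP^1)$ depending morphically on $e \in E$.

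The remaining task is to show that $g_e$ is independent of $e$. For this, the family $\{g_e\}_{e \in E}$ defines a morphism $E \to \Aut(\bP^1) = \PGL_2$. Since $E$ is a complete variety and $\PGL_2$ is an affine algebraic group, any such morphism must be constant (every regular function on $E$ is a constant, so the pullback of a closed embedding $\PGL_2 \hookrightarrow \bA^N$ forces constancy). Hence $g_e = \psi$ for some fixed $\psi \in \Aut(\bP^1)$, and we conclude $f = \varphi \times \psi$.

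The only mildly delicate point is the first step, namely verifying that $p_1$ is indeed (canonically) the Albanese map so that $f$ must be compatible with the first projection; this is standard since $q(E \times \bP^1) = q(E) = 1$ and the universal property of the Albanese identifies the map with $p_1$ up to translation. No step is a serious obstacle.
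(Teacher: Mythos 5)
Your proof is correct and follows essentially the same route as the paper: the automorphism descends to $E$ (the paper asserts this directly, you justify it via the Albanese property of $\pr_1$), and the fiberwise automorphisms give a morphism from the complete variety $E$ to the affine group $\Aut(\bP^1)=\PGL_2$, which must be constant.
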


\begin{proof}
Take any $g \in \Aut(E \times \bP^1)$.
Then $g$ induces $\vph \in \Aut(E)$ such that $\pr_1 \circ g=\vph \circ \pr_1$.
For $x \in E$, 
the morphism $g_x: \bP^1 \cong \{x\} \times \bP^1 \overset{g}{\to} \{\vph(x)\} \times \bP^1 \cong \bP^1$ is an automorphism on $\bP^1$.
This correspondence gives a  morphism $\Phi: E \to \Aut(\bP^1)$. 
But $E$ is complete and $\Aut(\bP^1)$ is affine, so $\Phi$ is constant.
\end{proof}

We give a proof of the following well-known result for the reader's convenience:
 
\begin{lem}[cf.~{\cite[1.1]{Dol09}}]\label{lem_unique}
Let $G$ be a finite group that is either a cyclic group or a dihedral group.
Then the faithful action of $G$ on $\bP^1$ is unique up to conjugacy.
\end{lem}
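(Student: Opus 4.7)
The plan is to interpret the statement as asserting that any two finite subgroups of $\Aut(\bP^1)=\PGL_2(\bC)$ that are both abstractly isomorphic to a given cyclic or dihedral group are conjugate in $\PGL_2(\bC)$, i.e.\ any faithful homomorphism $G \hookrightarrow \PGL_2(\bC)$ is determined up to conjugacy and up to precomposition with an automorphism of $G$. The strategy is to diagonalize, using that every finite order element of $\PGL_2(\bC)$ has two distinct fixed points on $\bP^1$, and then to bring the full subgroup into a fixed normal form by a suitable automorphism of $\bP^1$.

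For the cyclic case, let $\sigma \in \PGL_2(\bC)$ have order $n \ge 2$. Any lift of $\sigma$ to $\GL_2(\bC)$ is of finite order modulo scalars, hence semisimple, so $\sigma$ is diagonalizable and has exactly two fixed points on $\bP^1$. Conjugating by an element of $\Aut(\bP^1)$ that sends these fixed points to $0$ and $\infty$, we may assume $\sigma(z)=\lambda z$ for some primitive $n$-th root of unity $\lambda$. The generated subgroup $\{z \mapsto \zeta z : \zeta^n=1\}$ is then independent of the choice of $\lambda$, so all cyclic subgroups of order $n$ in $\PGL_2(\bC)$ are conjugate.

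For the dihedral case $G \cong D_n$ of order $2n$ with $n \ge 3$, the unique cyclic subgroup $C < G$ of order $n$ (unique because all elements of order $\ge 3$ in $D_n$ are rotations) can be conjugated, by the cyclic case, into the standard form above with fixed point set $\{0,\infty\}$. Any involution $\tau \in G \setminus C$ satisfies $\tau\sigma\tau^{-1}=\sigma^{-1}$ for the generator $\sigma$ of $C$, so $\tau$ must permute the fixed point set $\{0,\infty\}$ of $\sigma$; it cannot fix both points (otherwise $\tau$ would commute with $\sigma$, forcing $\sigma=\sigma^{-1}$ and $n \le 2$), so $\tau(z)=a/z$ for some $a \in \bC^*$. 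A further conjugation by $z \mapsto \mu z$ with $\mu^2=a$ preserves $C$ and sends $\tau$ to $z \mapsto 1/z$, yielding the standard dihedral subgroup. The remaining case $n=2$, where $D_2 \cong (\bZ/2)^2$ lacks a distinguished cyclic subgroup, is handled by picking any involution first, conjugating its fixed points to $\{0,\infty\}$ so it becomes $z \mapsto -z$, and then noting that any commuting second involution distinct from it must swap $\{0,\infty\}$ and thus be normalized to $z \mapsto 1/z$ exactly as above.

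The only mildly delicate point is ensuring that the second round of conjugation in the dihedral case does not disturb the already-standardized cyclic subgroup; this is automatic because conjugation by diagonal automorphisms $z \mapsto \mu z$ preserves the diagonal torus $\{z \mapsto \zeta z\}$. Beyond that, the proof is entirely a diagonalization exercise in $\PGL_2(\bC)$, so no genuine obstacle arises.
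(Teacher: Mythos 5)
Your proof is correct and follows essentially the same route as the paper's: diagonalize the cyclic generator so it becomes multiplication by a primitive root of unity at $0,\infty$, then show any reflection must swap $0$ and $\infty$ and can be rescaled to $z\mapsto 1/z$. The only cosmetic difference is that you split off the $D_2\cong(\bZ/2)^2$ case explicitly, whereas the paper absorbs it into the general dihedral argument; both treatments are sound.
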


\begin{proof}
We regard $G$ as a subgroup of $\Aut(\bP^1)$.
First, assume that $G$ is the $n$-th cyclic group generated by $g \in G$.
We may assume that $n \ge 2$.
Then $g$ has exactly two fixed points.
Replacing $G$ by a conjugate, it follows that $0, \infty$ are fixed points of $g$.
So $g$ is the multiplication (in the affine coordinate) by a primitive $n$-th root of unity $\zeta_n \in \bC$.
Therefore the faithful action of the $n$-th cyclic group to $\bP^1$ is, up to conjugacy, the multiplication by $n$-th roots of unity.

Second, assume that $G$ is the $n$-th dihedral group generated by $g, h \in G$ where $\ord(g)=n$ and $\ord(h)=2$.
We may assume that $n \ge 2$.
As above, after conjugacy, $g$ is the multiplication by a primitive $n$-th root of unity $\zeta_n \in \bC$.
Let $a, b \in \bP^1$ be the two fixed points of $h$.
Now we have $ghg=h$, so 
$\{h(0), h(\infty) \}=\{0, \infty\}$.
If $h(0)=0$ and $h(\infty)=\infty$, then $h$ is the multiplication by $-1$,
which implies that $G$ is generated by the multiplication by $-\zeta_n$, a contradiction.
So $h(0)=\infty$ and $h(\infty)=0$,
which implies that $h(z)=\frac{a}{z}$ (in the affine coordinate) for some $a \in \bC \setminus \{0\}$.
Replacing the affine coordinate $z$ by $w=\sqrt{a^{-1}}z$, we have $g(w)=\zeta_n w$ and $h(w)=\frac{1}{w}$.
So the assertion follows.
\end{proof}

\section{Proof}\label{sec_proof}

\subsection{The $q^\circ(X)=0$ or $2$ case}

\begin{prop}\label{prop_qcirc=0}
Let $X$ be a smooth finite quotient of an abelian surface with $q^\circ(X)=0$.
Then $X$ is isomorphic to $\bP^2$ or $\bP^1 \times \bP^1$.
\end{prop}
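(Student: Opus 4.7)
The plan is to apply Theorem \ref{thm_decomp} together with the Enriques--Kodaira classification to reduce $X$ to a short list of minimal rational surfaces, and then eliminate the remaining cases using the observation already present in Lemma \ref{lem_min}. Since $q^\circ(X)=0$, Theorem \ref{thm_decomp} forces the abelian factor $B$ to be trivial, so $\kappa(-K_X)=\dim F=2$; in particular $-K_X$ is big. Also $q(X)\le q^\circ(X)=0$.

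Next I would argue that $X$ is a smooth rational surface. The key point is that bigness of $-K_X$ prevents $K_X$ from being pseudo-effective: for any ample divisor $H$, writing $-K_X=A+E$ with $A$ ample and $E$ effective gives $-K_X\cdot H>0$, while pseudo-effectivity of $K_X$ would force $K_X\cdot H\ge 0$. Hence $\kappa(X)=-\infty$, and combined with $q(X)=0$ the Enriques--Kodaira classification yields that $X$ is rational.

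To finish, I would invoke Lemma \ref{lem_min}: $X$ has no $(-1)$-curve, so $X$ is a minimal rational surface, i.e.~$X\cong \bP^2$ or $X\cong \bF_n$ for $n=0$ or $n\ge 2$. Each $\bF_n$ with $n\ge 2$ carries a section of self-intersection $-n<0$, which contradicts the argument of Lemma \ref{lem_min}: the pullback of such an effective divisor along the quotient map $A\to X$ would again be effective with negative self-intersection, but every effective divisor on the abelian surface $A$ is nef. We conclude $X\cong \bP^2$ or $X\cong \bP^1\times\bP^1$.

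No step here is a genuine obstacle; the proof is essentially a direct application of the structure theorem plus the classification of surfaces. The only point that warrants explicit attention is noting that the argument of Lemma \ref{lem_min} actually forbids \emph{any} negative curve on $X$, not just $(-1)$-curves, which is exactly what is needed to rule out the Hirzebruch surfaces $\bF_n$ with $n\ge 2$.
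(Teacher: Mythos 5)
Your proof is correct and follows essentially the same route as the paper: Theorem~\ref{thm_decomp} to control $-K_X$, plus the minimality/no-negative-curve argument of Lemma~\ref{lem_min}. The only difference is that the paper extracts ampleness of $-K_X$ (i.e.\ that $X$ is del Pezzo) from the structure theorem, which excludes $\bF_n$ for $n\ge 2$ immediately, whereas you use only bigness of $-K_X$ and therefore need your (correct) observation that the proof of Lemma~\ref{lem_min} in fact forbids all effective divisors of negative self-intersection, not just $(-1)$-curves.
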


\begin{proof}
Theorem \ref{thm_decomp} implies that $X$ is a del Pezzo surface.
Moreover $X$ is a minimal surface (cf.~Lemma \ref{lem_min}), so the assertion follows.
\end{proof}

\begin{prop}\label{prop_qcirc=2}
Let $X$ be a smooth finite quotient of an abelian surface with $q^\circ(X)=2$.
Then $X$ is isomorphic to an abelian surface or a hyperelliptic surface.
\end{prop}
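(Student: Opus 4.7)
The plan is to use Theorem \ref{thm_decomp} directly and then upgrade the resulting quasi\'etale cover to an \'etale cover using purity of branch locus.

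First, I would apply Theorem \ref{thm_decomp} to $X$. This gives a finite quasi\'etale cover $\theta: B \times F \to X$ with $B$ an abelian variety, $F$ a $\bQ$-Fano finite quotient of an abelian variety, $\dim B = q^\circ(X) = 2$, and $\kappa(-K_X) = \dim F$. Since $\dim X = 2$, we must have $\dim F = 0$, which forces $F$ to be a single reduced point. Hence $\theta$ reduces to a finite quasi\'etale cover $\theta: B \to X$ from an abelian surface $B$.

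Next, I would promote $\theta$ from quasi\'etale to \'etale. Because $X$ is smooth, the Zariski--Nagata purity of the branch locus applies: the branch locus of a finite surjective morphism to a smooth variety is either empty or pure of codimension one, so a morphism that is \'etale in codimension one is actually \'etale everywhere. Therefore $\theta: B \to X$ is a finite \'etale cover, and $X$ is a smooth finite \'etale quotient of an abelian surface.

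Finally, I would appeal to the definition stated in the introduction: a hyperelliptic surface is a smooth projective surface that is a finite \'etale quotient of an abelian surface but is not itself an abelian surface. The dichotomy is then immediate: either $X$ is an abelian surface, or else $X$ is a hyperelliptic surface. There is essentially no obstacle here; the proposition is a direct corollary of Theorem \ref{thm_decomp} together with purity of the branch locus.
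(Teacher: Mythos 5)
Your proposal is correct, and its first half coincides with the paper's argument: both apply Theorem \ref{thm_decomp}, observe that $\dim B=q^\circ(X)=2$ forces $\dim F=0$, and thereby obtain a finite cover $\theta\colon B\to X$ from an abelian surface; your explicit appeal to Zariski--Nagata purity to upgrade quasi\'etale to \'etale is a sound justification of the \'etale cover that the paper simply asserts. Where you genuinely diverge is the last step. You conclude by invoking the introduction's definition of a hyperelliptic variety (a non-abelian finite \'etale quotient of an abelian variety), which makes the dichotomy essentially tautological once the \'etale cover is in hand. The paper instead computes $\chi(\cO_X)=(\deg\theta)^{-1}\chi(\cO_B)=0$, notes that $X$ is minimal (Lemma \ref{lem_min}) with $\kappa(X)=0$, and invokes the Enriques--Kodaira classification: among minimal surfaces of Kodaira dimension zero only abelian and hyperelliptic (bielliptic) surfaces have vanishing Euler characteristic. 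The difference is in what the conclusion means: your route leaves ``hyperelliptic surface'' meaning exactly ``non-abelian \'etale quotient of an abelian surface,'' whereas the paper's route identifies $X$ as a hyperelliptic surface in the classical sense of the surface classification, which is the sense in which Remark \ref{rmk_qcirc02} cites \cite{Bea96}. The two notions agree by the Bagnera--de Franchis theorem, so your argument is not wrong; but if the list in the main theorem is to be read against the standard classification, the paper's extra step (or an explicit appeal to Bagnera--de Franchis) is what supplies that identification.
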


\begin{proof}
Theorem \ref{thm_decomp} implies that $X$ admits a finite \'etale cover $\theta: A \to X$ from an abelian surface $A$.
This implies that $\chi(\cO_X)=(\deg \theta)^{-1}\chi(\cO_A)=0$.
Now $X$ is a minimal surface with $\kappa(X)=0$ and $\chi(\cO_X)=0$, so the assertion follows.
\end{proof}

\begin{rmk}\label{rmk_qcirc02}
All of the surfaces $\bP^2$, $\bP^1 \times \bP^1$, abelian surfaces, and hyperelliptic surfaces are finite quotients of abelian surfaces.
Let $E$ be an elliptic curve.
Then $\bP^2$ is the quotient of 
$$\{ (x,y,z) \in E^3 \mid x+y+z=0 \} \ (\cong E^2)$$
by the natural action of the third symmetric group $\fS_3$ (cf.~\cite[2.3]{Auf17}),
and $\bP^1 \times \bP^1$ is the quotient of $E \times E$ by a group of automorphisms generated by $1 \times [-1]$ and $[-1] \times 1$.
Abelian surfaces are trivial quotient of themselves.
It is well-known that hyperelliptic surfaces are finite quotients of abelian surfaces (see e.g.~\cite{Bea96}).
\end{rmk}

\subsection{The $q^\circ(X)=1$ case}

From now on, we fix a smooth finite quotient $X$ of an abelian surface with $q^\circ(X)=1$.

\begin{lem}\label{lem_P1-bdl}
$X$ is a $\bP^1$-bundle over an elliptic curve.
\end{lem}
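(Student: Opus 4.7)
The plan is to apply Theorem \ref{thm_decomp} to $X$: since $q^\circ(X)=1$, there exists a finite quasi-étale cover $\theta\colon B \times F \to X$ with $B$ an abelian variety of dimension one (an elliptic curve $E$) and $F$ a $\bQ$-Fano finite quotient of an abelian variety of dimension one. I would first check that $F$ is smooth --- otherwise the singularities of $B \times F$ would have codimension one (since $B$ is smooth), contradicting both the smoothness of $X$ and the fact that $\theta$ is étale in codimension one. Thus $F \cong \bP^1$, and by purity of the branch locus the quasi-étale morphism $\theta$ between smooth surfaces is actually étale, so I may write $X = (E \times \bP^1)/G$ for some finite group $G$ acting freely on $E \times \bP^1$.

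By Lemma \ref{lem_split}, every $g \in G$ splits as $f_g \times h_g$ with $f_g \in \Aut(E)$ and $h_g \in \Aut(\bP^1)$. The key step is a fixed-point analysis: $g$ acts freely on $E \times \bP^1$ iff $\Fix(f_g) = \emptyset$ or $\Fix(h_g) = \emptyset$, and since every $\bP^1$-automorphism has a fixed point, $f_g$ must be fixed-point-free for each $g \neq 1$. Writing $f_g = t_{a_g} \circ \phi_g$ in terms of translation and holonomy part, I would observe that if the holonomy part $\phi_g$ were nontrivial then $\phi_g - 1$ would be a nonzero isogeny of $E$ and hence surjective, placing $a_g$ in its image and giving $f_g$ a fixed point. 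Therefore $\phi_g = 1$ for every $g$, so $G$ acts on $E$ purely by translations, and $g \mapsto a_g$ is an injective homomorphism identifying $G$ with a finite subgroup of $E$ (injectivity, because $f_g = 1$ combined with freeness forces $h_g = 1$ as well).

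Finally, $E \to E/G =: E'$ is an étale Galois cover with group $G$, and the first projection $E \times \bP^1 \to E$ is $G$-equivariant; its descent $\pi\colon X \to E'$ realizes $X$ as the associated bundle $E \times^G \bP^1$ with fiber $\bP^1$. This is étale-locally trivial and hence Zariski-locally trivial over the curve $E'$ (since the Brauer group of a smooth curve over $\bC$ vanishes by Tsen's theorem), giving the required $\bP^1$-bundle structure. I expect the only delicate point to be the fixed-point analysis above; the rest is essentially bookkeeping on top of Theorem \ref{thm_decomp} and standard facts about $\bP^1$-fibrations over curves.
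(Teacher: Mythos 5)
Your argument has one genuine gap, at the point where you pass from ``there is a finite \'etale cover $\theta\colon E\times\bP^1\to X$'' to ``$X=(E\times\bP^1)/G$ for a finite group $G$ acting freely.'' A finite \'etale cover need not be Galois, and only for a Galois cover is the base the quotient of the total space by the deck transformation group; purity of the branch locus gives you \'etaleness but says nothing about Galoisness. The paper spends the first half of its proof on exactly this point: it forms the Galois closure $Z\to E\times\bP^1\to X$ and shows, using the Albanese morphism of $Z$ together with the fact that every \'etale cover of a $\bP^1$-fibre is an isomorphism, that $Z\cong E_Z\times\bP^1$ for another elliptic curve $E_Z$, after which one may assume $\theta$ is Galois. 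You need this step (or an equivalent one, e.g.\ noting that the Galois closure corresponds to the normal core of $\pi_1(E\times\bP^1)$ in $\pi_1(X)$ and that every finite \'etale cover of $E\times\bP^1$ is again of the form $E''\times\bP^1$ since $\bP^1$ is simply connected). Fortunately the repair is fully compatible with the rest of your argument.

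Once that is fixed, the remainder of your proof is correct and takes a genuinely different route from the paper's. The paper descends the $G$-action to $E$, obtains a $\bP^1$-fibration $X\to E/G$, invokes minimality of $X$ (Lemma \ref{lem_min}) to upgrade it to a $\bP^1$-bundle, and excludes $E/G\cong\bP^1$ by comparing $(K_X^2)=0$ with the value $8$ for Hirzebruch surfaces. Your fixed-point analysis is more direct: since every automorphism of $\bP^1$ has a fixed point, freeness of the $G$-action forces each $f_g$ to be fixed-point-free on $E$, hence a translation (as $\phi_g-1$ is surjective whenever $\phi_g\neq 1$); thus $E/G$ is automatically an elliptic curve and $X\to E/G$ is \'etale-locally, hence by Tsen's theorem Zariski-locally, trivial. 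This buys you the conclusion without appealing to minimality or to the $K^2$ computation, at the (small) cost of the Brauer-group input; either approach is fine.
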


\begin{proof}
By Theorem \ref{thm_decomp}, there exists a finite \'etale cover $\theta: E \times \bP^1 \to X$ where $E$ is an elliptic curve.
Take the Galois closure $Z \overset{\vep}{\to} E \times \bP^1 \overset{\theta}{\to} X$ of $\theta$.
Then $\vep$ is also \'etale and so $Z$ is smooth.
Now $q(Z)=q^\circ(X)=1$, so $Z$ has the Albanese morphism $a: Z \to E_Z$ to an elliptic curve.
The universality of Albanese morphism induces a morphism $\vph: E_Z \to E$ such that 
$\pr_1 \circ \vep=\vph \circ a$.
Taking the pullback of $\pr_1: E \times \bP^1 \to E$ by $\vph$,
we obtain the following commutative diagram:
$$
\xymatrix@C=40pt{
Z \ar@/^20pt/[rr]^{\vep} \ar[r]^{\vep_1}  \ar[dr]_a & E_Z \times \bP^1 \ar@{}[rd]|{\Box} \ar[r]^{\vep_2} \ar[d]_{\pr_1} & E \times \bP^1 \ar[d]^{\pr_1}  \\
 & E_Z \ar[r]^{\vph}  & E
}
$$
Now $\vep_1$ is a finite \'etale cover, 
so the restriction of it to any fibers are \'etale cover from $\bP^1$ to $\bP^1$,
which must be an automorphism.
This implies that $\vep_1$ is an isomorphism.
Replacing $E \times P^1$ by $Z=E_Z \times \bP^1$,
we may assume that $\theta: E \times \bP^1 \to X$ is Galois.

Let $G$ be the Galois group of $\theta$.
The action of $G$ on $E \times \bP^1$ descends to an action on $E$.
So we obtain the following commutative diagram:
$$
\xymatrix{
 E \times \bP^1 \ar@{}[rd] \ar[r]^{\theta} \ar[d]_{\pr_1} & X \ar[d]^{\pi}  \\
E \ar[r] & E/G
}
$$
Now $\pi$ is a $\bP^1$-fibration over $E/G$.
But $X$ is a minimal surface, so $\pi$ is a $\bP^1$-bundle over $E/G$.
If $E/G=\bP^1$, then $X$ is a Hirzebruch surface.
This implies that $(K_X^2)=8$ (cf.~\cite[Corollary 2.11]{Har77}), but this contradicts that 
$(K_X^2)=(\deg \theta)^{-1} (K_{E \times \bP^1}^2)=0$.
So $E/G$ is an elliptic curve.
\end{proof}

Set $X=A/G'$ for some abelian surface $A$ and a finite subgroup $G' < \Aut(A)$.
We may assume that $G'$ has no translation.
Let $q: A \to X$ be the quotient map, and $\pi:X \to E_X$ the Albanese morphism.
Changing the group structure of $E_X$ if necessary, we may assume that $\pi \circ q: A \to E_X$ is a group homomorphism.
Let $E \subset A$ be the neutral component of $\Ker(\pi \circ q)$.
Since $\Ker(\pi \circ q)$ is $G'$-invariant, 
$g(E)$ is a translate of $E$ for any $g \in G'$.
Lemma \ref{lem_persubvar} implies that 
$X \cong (E \times F)/G$, where $F$ is another elliptic curve and 
$G < \Aut(E \times F)$ is a finite subgroup consisting of automorphisms of the form $g \times h$ ($g \in \Aut(E),\ h \in \Aut(F)$).

%
%

Now Theorem \ref{thm_decomp} implies that $\kappa(-K_X)=1$.
Then, using \cite[Lemma 4.2]{Shi21}, just one of the following holds: 
\begin{itemize}
\item 
There exists $1 \times h \in G$ where $h$ is not a translation, 
and $G$ has no elements of the form $g \times 1$ where $g$ is not a translation.
\item
There exists $g \times 1 \in G$ where $g$ is not a translation, 
and $G$ has no elements of the form $1 \times h$ where $h$ is not a translation.
\end{itemize}
Note that the identity map is regarded as a translation by the origin.
These two conditions are symmetric, so we may assume that the first condition holds.

Take $1 \times h_0 \in G$ where $h_0$ is not a translation.
Suppose that $G$ contains an element $g \times h$ where $g$ is not a translation.
Multiplying  $1 \times h_0$ if necessary, we may assume that $h$ is not a translation.
Now $g \times h$ has a fixed point $z=(x,y) \in E \times F$.
Then the Chevalley--Shephard--Todd theorem implies that 
the action of the stabilizer $G_z$ of $z$ on the tangent space $T_z$ at $z$ is generated by pseudoreflections.
So $G_z$ must contain $g' \times 1$ and $1 \times h'$,
where $g'$, $h'$ have the same holonomy part as $g$, $h$, respectively.
But this contradicts the above condition.
So the first factor of any element of $G$ is a translation.

Set 
\begin{gather*}
N_E=\{t_b \in \Aut(E) \mid b \in E,\ t_b \times 1 \in G\}, \\
N_F=\{t_c \in \Aut(F) \mid c \in F,\ 1 \times t_c \in G\}.
\end{gather*}
Then $N_E \times 1$ is a normal subgroup of $G$, so 
$(E \times F)/G \cong ((E/N_E) \times F)/(G/(N_E \times 1))$.
Therefore we may assume that $N_E=1$.
Similarly we may assume that $N_F=1$.

In summary, now we have: 
\begin{itemize}
\item
$X \cong (E \times F)/G$ where $E, F$ are elliptic curves and $G < \Aut(E \times F)$ is a finite subgroup.
\item 
Any element of $G$ is of the form $t_a \times h$.
\item
$G$ contains no elements of the form $t_a \times 1$ nor $1 \times t_b$ except the identity element.
\end{itemize}
The third condition is equivalent to that any translation $t_a \times t_b$ in $G$ satisfies $\ord(a)=\ord(b)$.
Let $\Delta < G$ be the subgroup of translations.
Clearly it is a normal subgroup.

\begin{defn}\label{defn_rord}
Take $g \in G$.
Write $g=t_a \times (t_b \circ \vph)$ for some $a \in E$,  $b \in F$ and $\vph \in \Aut_{\grp}(F)$.
Let $\rord(g)$ be the order of $\vph$.
\end{defn}

\begin{lem}\label{lem_G/Delta}
$G/\Delta$ is a cyclic group of order $2, 3, 4$ or $6$.
\end{lem}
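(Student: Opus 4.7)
The plan is to realize $G/\Delta$ as a subgroup of $\Aut_{\grp}(F)$ via the holonomy-on-the-second-factor homomorphism, and then appeal to the well-known list of automorphism groups of elliptic curves.

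First I would define a map
\[
\Psi : G \longrightarrow \Aut_{\grp}(F), \qquad g = t_a \times (t_b \circ \vph) \ \longmapsto \ \vph,
\]
using the unique decomposition of the second factor into a translation followed by a group automorphism. Since translations commute with group homomorphisms up to another translation, $\Psi$ is a group homomorphism: if $g_i = t_{a_i} \times (t_{b_i} \circ \vph_i)$ ($i=1,2$), then the second factor of $g_1 g_2$ is $t_{b_1} \circ \vph_1 \circ t_{b_2} \circ \vph_2 = t_{b_1 + \vph_1(b_2)} \circ (\vph_1 \vph_2)$, so its holonomy part is indeed $\vph_1 \vph_2$.

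Next I would identify the kernel. By construction $\Psi(g) = 1$ precisely when the second factor of $g$ is a translation, i.e.\ when $g = t_a \times t_b$ is a pure translation; that is exactly $\Delta$. Consequently $\Psi$ induces an injection $G/\Delta \hookrightarrow \Aut_{\grp}(F)$. Since $F$ is an elliptic curve, $\Aut_{\grp}(F)$ is one of $\bZ/2$, $\bZ/4$, $\bZ/6$, so every subgroup is cyclic of order $1, 2, 3, 4$ or $6$, and hence so is $G/\Delta$.

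Finally I would rule out the trivial case. By the standing assumption (from the dichotomy preceding Definition \ref{defn_rord}), there is an element $1 \times h_0 \in G$ with $h_0$ not a translation; then $\Psi(1 \times h_0)$ is the nontrivial holonomy part of $h_0$, so $G/\Delta$ is nontrivial. This forces $|G/\Delta| \in \{2,3,4,6\}$, completing the lemma.

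No step here is a real obstacle: the only mild point is verifying that $\Psi$ is a homomorphism, which reduces to the observation that conjugating a translation by a group automorphism yields another translation; the rest is a direct application of the classical description of $\Aut_{\grp}(F)$ (as used implicitly in Lemmas \ref{lem_cancel} and \ref{lem_table}).
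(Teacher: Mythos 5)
Your proof is correct, and it takes a more direct route than the paper. The paper fixes an element $g$ of maximal $\rord$ and shows, via Lemma \ref{lem_cancel}, that every $h \in G$ agrees with some power $g^{\pm q}$ modulo $\Delta$, so that $G/\Delta$ is generated by the class of $g$; this requires the small case analysis ruling out $\rord(h) \nmid r$ and the order-matching trick $q = r/\rord(h)$. You instead observe that the ``holonomy of the second factor'' map $\Psi\colon G \to \Aut_{\grp}(F)$ is a homomorphism with kernel exactly $\Delta$ (using that every element of $G$ has the form $t_a \times h$), so $G/\Delta$ embeds into $\Aut_{\grp}(F) \in \{\bZ/2, \bZ/4, \bZ/6\}$ and is therefore automatically cyclic of order $1,2,3,4$ or $6$; this bypasses Lemma \ref{lem_cancel} and the $\rord$ bookkeeping entirely, since a subgroup of a cyclic group is cyclic. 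Both arguments ultimately rest on the same classical fact about $\Aut_{\grp}(F)$ (which the paper also invokes, inside the proof of Lemma \ref{lem_cancel}). You are also right to exclude the trivial quotient explicitly via the element $1 \times h_0$ with nontrivial holonomy; the paper handles this point only in the remark following the lemma ($G \neq \Delta$ since otherwise $X$ would be an abelian surface), so your treatment is, if anything, slightly more self-contained.
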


\begin{proof}
Set $\displaystyle r= \max_{g \in G} \rord(g)$ and fix $g \in G$ with $\rord(g)=r$.
Take any $h \in G$ and suppose $\rord(h)$ does not divide $r$.
This implies that $\Aut_{\grp}(F)=\bZ/6$, $\rord(h)=2$ and $r=3$.
But then $\rord(gh)=6>r$, a contradiction.
So  $\rord(h) | r$.
Set $q=r/\rord(h)$, then $\rord(g^q)=\rord(h)$.
So Lemma \ref{lem_cancel} implies that $h \cdot g^{\pm q} \in \Delta$.
Therefore $G/\Delta$ is the cyclic group generated by the equivalence class of $g$.
\end{proof}

Fix $g \in G$ generating $G/\Delta$, and set $r=\rord(g)$.
Write $g=t_{a_0} \times (t_{b_0} \circ \vph)$.
Note that $\vph \neq 1$ since $G \neq \Delta$ (if $G=\Delta$, then $(E \times F)/G$ is an abelian surface).
So $t_{b_0} \circ \vph$ has a fixed point $p \in F$.
Replacing $G$ by $(1 \times t_p)^{-1}G(1 \times t_p)$,
we may assume that $g=t_{a_0} \times \vph$.
Then $\ord(a_0) | r$ by assumption.

\begin{lem}\label{lem_fixed}
For any $\tau=t_a \times t_b \in \Delta$, $b$ is a fixed point of $\vph$.
\end{lem}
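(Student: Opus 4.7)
The plan is to exploit the normality of $\Delta$ in $G$ together with the third bullet condition (that $G$ contains no element of the form $1 \times t_c$ other than the identity). Since $\Delta \triangleleft G$, the conjugate $g \tau g^{-1}$ lies in $\Delta$; computing this conjugate explicitly should pull a $\vph(b)$ out of the second factor, and then combining with $\tau^{-1}$ should produce a pure second-factor translation, which the third condition forces to be trivial.

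Concretely, I would write $g = t_{a_0} \times \vph$ and $g^{-1} = t_{-a_0} \times \vph^{-1}$, and compute
\[
g \tau g^{-1}(x,y) = g\bigl(\tau(x - a_0,\, \vph^{-1}(y))\bigr) = g\bigl(x - a_0 + a,\, \vph^{-1}(y) + b\bigr) = \bigl(x + a,\, y + \vph(b)\bigr),
\]
so $g\tau g^{-1} = t_a \times t_{\vph(b)}$. Then the commutator
\[
[\tau^{-1},\, g]\;=\;\tau^{-1}\cdot g\tau g^{-1}\;=\;(t_{-a} \times t_{-b})(t_a \times t_{\vph(b)})\;=\;1 \times t_{\vph(b) - b}
\]
lies in $G$.

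Finally, I invoke the third bullet of our standing conditions on $G$: the only element of $G$ of the form $1 \times t_c$ is the identity. Hence $\vph(b) - b = 0$, i.e.\ $b$ is fixed by $\vph$, as required. There is no real obstacle here; the only subtlety is setting up the commutator correctly so that the first factor cancels and only $\vph(b)-b$ survives in the second factor, at which point the normalization of $G$ (no nontrivial pure second-factor translations) closes the argument.
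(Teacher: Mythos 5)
Your proof is correct and is essentially the same argument as the paper's: both conjugate $\tau$ by $g$ to turn $b$ into $\vph(b)$ in the second factor and then invoke the standing normalization that $G$ contains no nontrivial element of the form $1 \times t_c$ to force $\vph(b)=b$. The only cosmetic difference is that you package the comparison as a commutator, whereas the paper writes $g\circ\tau=\tau'\circ g$ and matches components.
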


\begin{proof}
Lemma \ref{lem_G/Delta} implies that 
$g \circ \tau= \tau' \circ g^m$ for some $\tau'=t_c \times t_d \in \Delta$ and $0 \le m \le r-1$.
Comparing the holonomy parts, we have $m=1$.
So we have 
$$t_{a_0+a} \times (t_{\vph(b)} \circ \vph) =t_{c+a_0} \times (t_d \circ \vph).$$
Therefore $a=c$ and $\vph(b)=d$.
Now the first factor of $\tau$ and $\tau'$ are equal, so $\tau=\tau'$ and $b=d$.
Hence $\vph(b)=b$.
\end{proof}

\begin{lem}\label{lem_comm}
$G=\Delta \times \langle g \rangle$.
\end{lem}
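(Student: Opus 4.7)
The plan is to combine Lemma \ref{lem_G/Delta}, which gives that $G/\Delta$ is cyclic of order $r$ generated by the class of $g$, with a commutation computation based on Lemma \ref{lem_fixed}. This will upgrade the short exact sequence $1 \to \Delta \to G \to G/\Delta \to 1$ from a (tautological) semidirect product to a direct product. There are two things to verify: that $g$ centralizes $\Delta$, and that $\langle g \rangle \cap \Delta = \{1\}$ with $|\langle g \rangle| = r$.

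For the first, I would take an arbitrary $\tau = t_a \times t_b \in \Delta$ and compute the conjugate directly. The first factor of $g\tau g^{-1}$ is $t_{a_0}\circ t_a \circ t_{-a_0} = t_a$, since translations on $E$ commute. The second factor is $\vph \circ t_b \circ \vph^{-1} = t_{\vph(b)}$, since $\vph$ is a group automorphism of $F$. Thus $g\tau g^{-1} = t_a \times t_{\vph(b)}$, and Lemma \ref{lem_fixed} says $\vph(b) = b$, so $g \tau g^{-1} = \tau$. Hence $\langle g \rangle$ centralizes $\Delta$.

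For the second, note that $g^i = t_{i a_0} \times \vph^i$, so $g^i \in \Delta$ if and only if $\vph^i = 1$, i.e., $r \mid i$. In particular $g^r = t_{r a_0} \times 1 \in G$. But by the third standing condition on $G$ (no nontrivial elements of the form $t_c \times 1$), this forces $r a_0 = 0$, giving $g^r = 1$ and $|\langle g\rangle| = r$, together with $\langle g \rangle \cap \Delta = \{1\}$. Combined with $|G/\Delta| = r$ from Lemma \ref{lem_G/Delta}, we get $G = \Delta \langle g \rangle$, $\Delta \trianglelefteq G$, and trivial intersection with central action of $\langle g \rangle$ on $\Delta$; so $G = \Delta \times \langle g \rangle$ as claimed.

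I do not anticipate a serious obstacle: both steps are essentially formal once Lemmas \ref{lem_G/Delta} and \ref{lem_fixed} and the normalizations made before Definition \ref{defn_rord} are in hand. The only subtle point is ensuring that the shift $g = t_{a_0} \times \vph$ (with $b_0$ absorbed by a conjugation on the $F$-factor) really forces $g^r = 1$, but this is exactly the third bulleted condition, so it drops out immediately.
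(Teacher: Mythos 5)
Your proof is correct and follows essentially the same route as the paper: the key step in both is the commutation $g\tau g^{-1}=\tau$ via $\vph(b)=b$ from Lemma \ref{lem_fixed}. You additionally spell out the verification that $g^r=1$ and $\Delta\cap\langle g\rangle=\{1\}$ (using $\ord(a_0)\mid r$), which the paper leaves implicit; this is a welcome completion rather than a different argument.
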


\begin{proof}
For $\tau=t_a \times t_b \in \Delta$,
$$g \circ \tau=t_{a_0+a} \circ (t_{\vph(b)} \circ \vph)
=t_{a_0+a} \circ (t_{b} \circ \vph)=\tau \circ g.$$
So $\langle g \rangle$ is a normal subgroup of $G$ and hence the assertion follows.
\end{proof}

\begin{lem}\label{lem_Delta}
\begin{itemize}
\item[\ ]
\item[(1)]
If $r=2$, then $\Delta \cong \{1\}$, $\bZ/2$ or $(\bZ/2)^2$.
\item[(2)]
If $r=3$, then $\Delta \cong \{1\}$ or $\bZ/3$.
\item[(3)]
If $r=4$, then $\Delta \cong \{1\}$ or $\bZ/2$.
\item[(4)]
If $r=6$, then $\Delta \cong \{1\}$.
\end{itemize}
\end{lem}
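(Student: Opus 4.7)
The plan is to pass $\Delta$ through the second projection and use the fixed-point count of Lemma \ref{lem_table}. Define the map
\[
\Delta \longrightarrow F, \qquad t_a \times t_b \longmapsto b.
\]
This is a group homomorphism, and by the third bullet in the summary just before Definition \ref{defn_rord} (equivalently, since $N_F=1$) any element $t_a \times t_b \in \Delta$ with $b=0$ forces $\ord(a)=\ord(b)=1$, hence $a=0$. So the map is injective.

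Next, Lemma \ref{lem_fixed} says that for any $t_a \times t_b \in \Delta$, the component $b$ is a fixed point of $\vph$. Since $\Fix(\vph) = \Ker(1-\vph)$ is a subgroup of $F$ whose order equals the number of fixed points of $\vph$, the previous step realises $\Delta$ as a subgroup of $\Fix(\vph)$.

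Now the four cases just read off Lemma \ref{lem_table}. When $r=2$ we have $\vph=[-1]$, so $\Fix(\vph)=F[2]\cong (\bZ/2)^2$, and the subgroups give exactly the three possibilities $\{1\}, \bZ/2, (\bZ/2)^2$. When $r=3$, $|\Fix(\vph)|=3$, so $\Fix(\vph)\cong\bZ/3$ and $\Delta$ is either trivial or all of $\bZ/3$. When $r=4$, $|\Fix(\vph)|=2$, giving $\Delta\cong\{1\}$ or $\bZ/2$. When $r=6$, $|\Fix(\vph)|=1$, forcing $\Delta=\{1\}$.

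There is no real obstacle here beyond correctly identifying $\Fix(\vph)$ as a subgroup in each case: the injectivity into $F$ is immediate from the normalisation $N_F=1$, and the rest is a direct application of the previously established Lemma \ref{lem_fixed} and Lemma \ref{lem_table}. The only subtlety worth double-checking is that $\Fix(\vph)$ really is a subgroup (it is, as the kernel of the isogeny $1-\vph$), so that its order, rather than merely its cardinality as a set, matches the fixed-point count.
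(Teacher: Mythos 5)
Your proof is correct, and it takes a genuinely more streamlined route than the paper's. The paper first embeds $\Delta$ into $E[s]$ via the \emph{first} projection, using the computation $h^s=t_{sp}\times 1$ and $(\tau\circ h)^s=t_{s(p+a)}\times 1$ (which relies on Lemma \ref{lem_power}); this alone only yields $\Delta\hookrightarrow E[2]\cong(\bZ/2)^2$ for $r=2,4$ and $\Delta\hookrightarrow E[3]\cong(\bZ/3)^2$ for $r=3$, so the paper must then invoke Lemmas \ref{lem_fixed} and \ref{lem_table} as a supplementary \emph{cardinality} bound ($\#\Delta\le\#\Fix(\vph)$) to eliminate $(\bZ/3)^2$ when $r=3$ and $(\bZ/2)^2$ when $r=4$. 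You instead use the \emph{second} projection throughout and upgrade the fixed-point count to a group-theoretic statement: $\Fix(\vph)=\Ker(1-\vph)$ is a subgroup of $F$ whose order is the fixed-point number of Lemma \ref{lem_table}, so $\Delta$ embeds as a subgroup of $\Fix(\vph)$ and all four cases follow in one stroke, with no need for Lemma \ref{lem_power} or the first projection. The only blemish is the parenthetical ``since $N_F=1$'': injectivity of $t_a\times t_b\mapsto b$ on $\Delta$ is the statement that $G$ contains no nontrivial $t_a\times 1$, i.e.\ $N_E=1$; your main justification via $\ord(a)=\ord(b)$ is correct and covers this, so the slip is harmless.
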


\begin{proof}
Assume that $G$ has an element $h =t_p \times (t_q \circ \psi) \in G$ with $\rord(h)=s \ge 2$.
Take any $\tau =t_a \times t_b \in \Delta$. 
Then we have 
$$h^s=t_{sp} \times (t_{q+\psi(q)+\cdots+\psi^{s-1}(q)} \circ \psi^s) \overset{\mathrm{\ref{lem_power}}}{=} t_{sp} \times 1,$$
and
$$(\tau \circ h)^s=t_{s(p+a)} \times (t_{(b+q)+\psi(b+q)+\cdots+\psi^{s-1}(b+q)} \circ \psi^s) \overset{\mathrm{\ref{lem_power}}}{=} t_{s(p+a)} \times 1.$$
So $sp=s(p+a)=0$, and $\ord(a) | s$.
Now $\ord(a)=\ord(b)=\ord(\tau)$, so $\ord(\tau)|s$.
Since $\tau \in \Delta$ is arbitrary, 
$\Delta$ is embedded into $E[s]$ by the projection to the first factor.

Assume that $r=2$.
Then $\Delta$ is embedded into $E[2] \cong (\bZ/2)^2$.
So $\Delta \cong \{1\}$, $\bZ/2$ or $(\bZ/2)^2$.

Assume that $r=3$.
Then $\Delta$ is embedded into $E[3] \cong (\bZ/3)^2$.
So $\Delta \cong \{1\}$, $\bZ/3$ or $(\bZ/3)^2$.
But Lemmas \ref{lem_fixed} and \ref{lem_table} implies that $\# \Delta \le 3$,
so $\Delta \cong \{1\}$ or $\bZ/3$.

Assume that $r=4$.
Then $\Delta$ is embedded into $E[2] \cong (\bZ/2)^2$.
So $\Delta \cong \{1\}$, $\bZ/2$ or $(\bZ/2)^2$.
But Lemmas \ref{lem_fixed} and \ref{lem_table} implies that $\# \Delta \le 2$,
so $\Delta \cong \{1\}$ or $\bZ/2$.

Finally, assume that $r=6$.
Then $\ord(\tau) | 2$ and $\ord(\tau) |3$ for any $\tau \in \Delta$,
so $\Delta=\{1\}$.
\end{proof}

\begin{lem}\label{lem_reduction}
Set $N=\Ker(G \to \Aut(E))$, where $G \to \Aut(E)$ is the projection to the first factor.
\begin{itemize}
\item[(1)]
$N$ is a non-trivial cyclic group.
\item[(2)]
$(E \times F)/N \cong E \times \bP^1$.
\item[(3)]
$G/N \cong \{1\},\ \bZ/2,\ \bZ/3,\ \bZ/4$, or $(\bZ/2)^2$.
\end{itemize}
\end{lem}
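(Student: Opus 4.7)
The plan is to parametrize $G$ using the direct product decomposition $G = \Delta \times \langle g \rangle$ from Lemma \ref{lem_comm}, with $g = t_{a_0} \times \vph$; every element of $G$ then has the form $t_{a+ka_0} \times (t_b \vph^k)$, where $\tau = t_a \times t_b$ ranges over $\Delta$ and $0 \le k < r$. Such an element lies in $N$ precisely when $a + k a_0 = 0$ in $E$. All three parts will follow from this explicit description together with Lemma \ref{lem_Delta} and the hypothesis $q^\circ(X) = 1$.

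For the cyclicity in part (1), I would use that each element of $N$ has the form $1 \times (t_b \vph^k)$, so the holonomy-part projection $N \to \langle \vph \rangle$ sending $1 \times (t_b \vph^k) \mapsto \vph^k$ is a well-defined homomorphism; its kernel consists of elements $1 \times t_b \in G$, which by the third bullet reduces to the identity. Hence $N$ embeds into $\langle \vph \rangle \cong \bZ/r$, so $N$ is cyclic. For non-triviality, I would argue by contradiction: if $N = \{1\}$, then every non-identity $g \in G$ has a nonzero translation in its first factor, hence acts on $E \times F$ without fixed points. So $G$ acts freely, $E \times F \to X$ is finite \'etale, and $q^\circ(X) \ge q(E \times F) = 2$, contradicting $q^\circ(X) = 1$.

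For part (2), since $N$ acts trivially on the first factor, the quotient splits as $(E \times F)/N = E \times (F/N)$. A non-trivial generator of $N$ is $1 \times h$ with $h$ a non-translation of $F$ (otherwise the third bullet would be violated), so $h$ has non-trivial holonomy. Therefore $F/N$ is the quotient of an elliptic curve by a finite cyclic group with non-trivial holonomy part, hence $\bP^1$, giving $(E \times F)/N \cong E \times \bP^1$.

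For part (3), $G/N$ is identified via first projection with an abelian subgroup of the translation group of $E$, explicitly $\Delta_E + \langle a_0 \rangle$, where $\Delta_E$ is the (faithful) image of $\Delta$. Setting $m = \ord(a_0) \mid r$ and $e = |\Delta_E \cap \langle a_0 \rangle|$, one computes $|N| = e \cdot r/m$ and hence $|G/N| = |\Delta| \cdot m/e$. Combining Lemma \ref{lem_Delta}'s bound on $|\Delta|$ with the lower bound $|N| \ge 2$ from part (1), a case analysis over $r \in \{2, 3, 4, 6\}$ and the permitted values of $|\Delta|$ yields $|G/N| \le 4$ and matches the listed possibilities $\{1\}, \bZ/2, \bZ/3, \bZ/4, (\bZ/2)^2$. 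The main obstacle will be this case analysis: for each value of $r$ and each $|\Delta|$ permitted by Lemma \ref{lem_Delta}, I must verify that the non-triviality of $N$ established in part (1) precisely rules out the configurations where $|G/N|$ would exceed $4$.
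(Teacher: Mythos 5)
Your proposal is correct and follows essentially the same route as the paper: non-triviality of $N$ from the non-freeness of the $G$-action forced by $q^\circ(X)=1$, cyclicity via the embedding of $N$ into $\langle\vph\rangle$ (the paper invokes the argument of Lemma \ref{lem_G/Delta}, which amounts to the same thing since $N\cap\Delta=\{1\}$), the splitting $(E\times F)/N=E\times(F/N)$ for part (2), and for part (3) a case analysis based on $G=\Delta\times\langle g\rangle$ and Lemma \ref{lem_Delta}. The paper phrases (3) as ``$G$ is one of eight explicit abelian groups and $N$ is a non-trivial cyclic subgroup,'' whereas you bound $\abs{G/N}\le 4$ directly, but both reduce to the same enumeration.
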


\begin{proof}
Now the action of $G$ is not free by assumption, 
so $G \setminus \{1\}$ must contain an automorphism that has a fixed point.
Therefore $N \neq \{1\}$.
The claim that $N$ is cyclic is proved as in Lemma \ref{lem_G/Delta}.

Let $1 \times h$ be a generator of $N$.
Then $h$ has non-trivial holonomy part.
So $(E \times F)/N = E \times (F/\langle h \rangle) \cong E \times \bP^1$.

Now $G=\Delta \times \langle g \rangle$ by Lemma \ref{lem_comm}.
So Lemma \ref{lem_Delta} implies that $G$ is isomorphic to one of the following:
$$\bZ/2,\ (\bZ/2)^2,\ (\bZ/2)^3,\ \bZ/3,\ (\bZ/3)^2,\ \bZ/4,\ \bZ/2 \times \bZ/4,\ \bZ/6.$$
Moreover, $N$ is a non-trivial cyclic subgroup of $G$.
So the third assertion follows.
\end{proof}

Set $K=\Ker(G/N \to \Aut(F))$.
Then $(E \times \bP^1)/K=E' \times \bP^1$ for another elliptic curve $E'$, 
and the action of $G'=(G/N)/K$ on $E' \times \bP^1$ is induced from 
a faithful action $G' \to \Aut(E')$ by translations and a faithful action $G' \to \Aut(\bP^1)$.
As a result, we obtain the following: 

\begin{prop}\label{prop_candidates}
Let $X$ be a smooth finite quotient of an abelian surface with $q^\circ(X)=1$.
Then there exists an elliptic curve $E$ and a finite group $G$ such that 
\begin{itemize}
\item
$G$ acts faithfully on $E$  by translations.
\item
$G$ acts faithfully on $\bP^1$.
\item
$G \cong \bZ/2,\ \bZ/3,\ \bZ/4$, or $(\bZ/2)^2$.
\item
$X \cong (E \times \bP^1)/G$.
\end{itemize}
\end{prop}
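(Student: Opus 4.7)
The plan is to take the normalized description of $X$ developed in the preceding discussion and apply two successive quotient reductions to reach the stated form $(E \times \bP^1)/G$.

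First I would start with $X \cong (E \times F)/G$ where $E, F$ are elliptic curves, every element of $G$ has the form $t_a \times h$, $G$ contains no non-identity elements of the form $t_a \times 1$ or $1 \times t_b$, and $N := \Ker(G \to \Aut(E))$ is the non-trivial cyclic subgroup produced by Lemma \ref{lem_reduction}. The first reduction is precisely the quotient by $N$: it yields $(E \times F)/N \cong E \times \bP^1$ (Lemma \ref{lem_reduction}(2)) while reducing the acting group to $G/N$, which by Lemma \ref{lem_reduction}(3) is one of $\{1\}, \bZ/2, \bZ/3, \bZ/4$, or $(\bZ/2)^2$.

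Second I would define $K := \Ker(G/N \to \Aut(\bP^1))$ in order to absorb the elements acting trivially on the $\bP^1$-factor. Since every element of $G/N$ acts on $E$ as a translation, elements of $K$ act as pure translations of $E$ and trivially on $\bP^1$, so $(E \times \bP^1)/K \cong (E/K) \times \bP^1$. Setting $E' := E/K$ and $G' := (G/N)/K$, I get $X \cong (E' \times \bP^1)/G'$, with $G'$ acting faithfully on $\bP^1$ by construction. Lemma \ref{lem_split} ensures that the action of $G'$ on $E' \times \bP^1$ splits as a product of the two coordinate actions, and $G'$ — a subquotient of $G/N$ — lies in the stated list of isomorphism classes (the trivial case corresponding to $X \cong E' \times \bP^1$, which fits the classification via $L = \cO_{E'}$).

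The main obstacle is to verify that $G'$ also acts \emph{faithfully} on $E'$; without this the first bullet of the proposition fails. The key point will be that the first-projection map $G \to \Aut(E)$ lands in translations and has kernel exactly $N$, so $G/N$ embeds into $E$ as a finite subgroup of translations. Quotienting by $K$, itself a subgroup of translations, then gives an embedding $G' \hookrightarrow E/K = E'$, which furnishes the required faithful translation action. Everything else is routine bookkeeping on the subquotient structure produced by Lemmas \ref{lem_G/Delta}--\ref{lem_reduction}.
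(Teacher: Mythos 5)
Your proposal follows essentially the same route as the paper: quotient first by $N=\Ker(G\to\Aut(E))$ using Lemma \ref{lem_reduction}, then by $K=\Ker(G/N\to\Aut(\bP^1))$, and identify $G'=(G/N)/K$ with a group in the stated list acting faithfully on both factors. Your explicit verification that $G'$ acts faithfully on $E'=E/K$ (via the embedding $G/N\hookrightarrow E$ as translations) and your remark about the trivial-group case are details the paper leaves implicit, but the argument is the same.
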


Let us investigate the structures of $(E \times \bP^1)/(\bZ/n)$ and $(E \times \bP^1)/(\bZ/2)^2$.

\begin{lem}\label{lem_quotbyZ/n}
Let $E$ be an elliptic curve and $G \cong \bZ/n$ the $n$-th cyclic group such that 
\begin{itemize}
\item
$G$ acts faithfully on $E$  by translations.
\item
$G$ acts faithfully on $\bP^1$.
\end{itemize}
Then $(E \times \bP^1)/G \cong \bP_{E'}(\cO_{E'} \oplus L)$,
where $E'=E/G$ and $L$ is a torsion line bundle of order $n$ on $E'$.
\end{lem}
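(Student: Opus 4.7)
The plan is to identify the quotient $X := (E \times \bP^1)/G$ with an explicit projective bundle on $E' := E/G$ via equivariant descent. The first observation is that because $G$ acts on $E$ by translations, its action on $E$ is free, and hence so is the diagonal action on $E \times \bP^1$. Thus the quotient map $E \times \bP^1 \to X$ is an étale Galois cover of degree $n$, and the first projection descends to a morphism $X \to E'$ fitting in a fiber square with the étale cover $\pi: E \to E'$. So it suffices to produce a rank-$2$ vector bundle $V$ on $E'$ such that $X \cong \bP_{E'}(V)$.

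To find $V$, I would first normalize the $G$-action on $\bP^1$ using Lemma \ref{lem_unique}: up to conjugation in $\Aut(\bP^1)$, a faithful action of the cyclic group of order $n$ on $\bP^1$ is given by $[x:y]\mapsto[\zeta_n x:y]$ for a primitive $n$-th root of unity $\zeta_n$. This action is the projectivization of the $\GL_2$-action $\mathrm{diag}(\zeta_n,1)$ on $\bC^2$. Writing $E \times \bP^1 = \bP_E(\cO_E \oplus \cO_E)$, the combined $G$-action then comes from a $G$-linearization of $\cO_E \oplus \cO_E$ in which the two line-bundle summands are equivariant for translation on $E$ and are weighted, respectively, by the character $\chi: g \mapsto \zeta_n$ and the trivial character. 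Since $G$ acts freely on $E$, equivariant descent along the étale Galois cover $\pi$ yields a rank-$2$ vector bundle $V$ on $E'$ with $\pi^* V \cong \cO_E \oplus \cO_E$ equivariantly; each summand descends separately, so $V \cong \cO_{E'} \oplus L$, where $L$ is the line bundle on $E'$ whose pullback is trivial with $\chi$-linearization. Projectivizing and then taking $G$-quotients on both sides gives $X \cong \bP_{E'}(\cO_{E'} \oplus L)$.

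The last step is to verify that $L$ has order exactly $n$ in $\Pic(E')$. This I would extract from the standard duality for the étale Galois cover $\pi$: the map $\widehat{G}\to \ker\bigl(\pi^*:\Pic^0(E')\to\Pic^0(E)\bigr)$ sending a character to the descent of the trivial line bundle with that linearization is an isomorphism, and it preserves orders. Since $\chi$ is a primitive character of the cyclic group $G$ of order $n$, the line bundle $L$ has order $n$.

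The main obstacle I anticipate is this final order computation. The equivariant descent produces $L$ immediately, but pinning down its order requires either invoking the Kummer-type identification above or an explicit transition-function computation over a small étale neighborhood of $E'$ trivializing the cover $\pi$; one must take care that the character used in the linearization of the first summand really is primitive, a point that relies crucially on the faithfulness hypothesis for the $G$-action on $\bP^1$. Everything else in the argument is formal once the trivial bundle $\cO_E \oplus \cO_E$ has been equipped with the correct $G$-linearization.
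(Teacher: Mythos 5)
Your proof is correct, but it runs in the opposite direction from the paper's. You start from the quotient $(E\times\bP^1)/G$ and descend the trivial rank-$2$ bundle: after using Lemma \ref{lem_unique} to put the $\bP^1$-action in the diagonal form $\mathrm{diag}(\zeta_n,1)$, you equip $\cO_E\oplus\cO_E$ with the corresponding $G$-linearization (the generator acts by $t_a^*$ twisted by the characters $\chi$ and $1$ on the two summands; this is a genuine linearization since its $n$-th power is the identity), descend each summand along the \'etale Galois cover $E\to E'$, and read off the order of $L$ from the isomorphism $\widehat{G}\xrightarrow{\sim}\Ker\bigl(\pi^*:\Pic^0(E')\to\Pic^0(E)\bigr)$. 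The paper instead goes bottom-up: it writes down an explicit order-$n$ line bundle $L$ on $E'$ via a character of the lattice $\Lambda'$, pulls $\bP_{E'}(\cO_{E'}\oplus L)$ back along $q:E\to E'$ to recover $E\times\bP^1$ with a Galois $\bZ/n$-action, verifies (using the order of $L$) that this action is faithful on the $\bP^1$-factor, and then invokes Lemma \ref{lem_unique} to conclude the two quotients coincide. So the same uniqueness lemma is used in both arguments but for different purposes: you use it only to split the linearization into two line-bundle summands, while the paper uses it as the final identification step. Your route is more intrinsic --- it produces the bundle $\cO_{E'}\oplus L$ directly rather than guessing it and matching actions, and it makes the appearance of the order-$n$ torsion bundle transparent via the duality $\widehat{G}\cong\Ker(\pi^*)$ --- at the cost of invoking the formalism of $G$-linearizations and \'etale descent, where the paper's lattice-character computation is entirely elementary. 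Both are complete; the one point you rightly flag as needing care, that $\chi$ is a primitive character, is exactly where the faithfulness hypothesis on the $\bP^1$-action enters, and your injectivity argument for $\widehat{G}\to\Ker(\pi^*)$ (which rests on $H^0(E,\cO_E^*)=\bC^*$) settles the order computation.
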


\begin{proof}
Set $G=\langle t_a \times \vph \rangle$, then $E'=E/\langle a \rangle$.
Write $E=\bC/\Lambda$ and let $v \in \frac{1}{n} \Lambda$ be a vector that induces $a \in E$.
Now $\lambda=nv \in \Lambda$ is a primitive vector, so we can take $\mu \in \Lambda$ such that 
$\Lambda=\langle \lambda, \mu \rangle$.
Then $E'=\bC/\Lambda'$ with $\Lambda'=\langle v, \mu \rangle$ and the quotient map $q: E \to E'$ is induced by the identity map $\id_{\bC}: \bC \to \bC$.

Take a character $\chi: \Lambda' \to \bC_1$ defined by 
$\chi(kv+l \mu)=\zeta_n^k$, where $\zeta_n$ is a primitive $n$-th root of unity.
Then the line bundle $L$ defined by $\chi$ is a torsion line bundle of order $n$ such that $q^* L=\cO_E$.
Now we have the following diagram of a fiber product:
$$
\xymatrix@C=40pt{
 E \times \bP^1 \ar@{}[rd]|{\Box} \ar[r]^{p} \ar[d]_{\pr_1} & \bP_{E'}(\cO_{E'} \oplus L) \ar[d]^{\pi}  \\
E \ar[r]^q & E'
}
$$
Here $q$ is a finite Galois cover with Galois group $G' \cong \bZ/n$,
so $p$ is also Galois with the same Galois group.
Lemma \ref{lem_split} implies that $G'$ acts on $\bP^1$.

Set $N=\Ker(G \to \Aut(\bP^1))$.
Then $q$ splits as $E \to E'' \to E'$ and the pull-back of $\cO_{E'}\oplus L$ to $E''$ is trivial.
But the order of $L$ is $n$, so $N=\{1\}$.
Therefore $G' \to \Aut(\bP^1)$ is also faithful.
The faithful action of $\bZ/n$ on $\bP^1$ is unique up to conjugacy (cf.~Lemma \ref{lem_unique}),
so $(E \times \bP^1)/G \cong (E \times \bP^1)/G'=\bP_{E'}(\cO_{E'} \oplus L)$.
\end{proof}

\begin{lem}\label{lem_quotby(Z/2)^2}
Let $E$ be an elliptic curve and $G$ be a group with $G \cong (\bZ/2)^2$ such that 
\begin{itemize}
\item
$G$ acts faithfully on $E$  by translations.
\item
$G$ acts faithfully on $\bP^1$.
\end{itemize}
Then $(E \times \bP^1)/G \cong \Sym^2 E$.
\end{lem}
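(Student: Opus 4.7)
The plan is to exhibit an explicit isomorphism $(E\times\bP^1)/G\cong\Sym^2 E$. Since $G\cong(\bZ/2)^2$ acts faithfully on $E$ by translations, its image in $E$ is a Klein-four subgroup of $E$, every nontrivial element of which has order $2$; it must therefore be $E[2]$, so I identify $G=E[2]$ acting on $E$ by translation. As $(\bZ/2)^2$ is the dihedral group $D_2$, Lemma~\ref{lem_unique} says that the faithful action of $G$ on $\bP^1$ is unique up to conjugacy in $\Aut(\bP^1)$. Conjugating by $\id_E\times\phi$ does not change the isomorphism class of the quotient, so I may fix any convenient explicit model for the $\bP^1$-action.

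The natural model is $\bP^1=E/[-1]$, via the standard $2$-to-$1$ quotient: since $-a=a$ for $a\in E[2]$, each translation $t_a$ commutes with $[-1]$ and descends to an automorphism $\sigma_a$ of $\bP^1$, and $a\mapsto\sigma_a$ is easily seen to be faithful. With this model, define
\[
 f\colon E\times\bP^1\longrightarrow\Sym^2 E,\qquad (x,[v])\longmapsto\{x+v,\,x-v\}.
\]
Replacing $v$ by $-v$ swaps the two entries of the unordered pair, so $f$ is well-defined; and because $2a=0$ for $a\in E[2]$,
\[
 f(x+a,[v+a])=\{x+v+2a,\,x-v\}=\{x+v,\,x-v\}=f(x,[v]),
\]
so $f$ descends to a morphism $\bar f\colon(E\times\bP^1)/G\to\Sym^2 E$.

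To conclude, I would show $\bar f$ is an isomorphism by checking it is a bijective finite morphism between smooth projective surfaces. Smoothness of the source follows because $G=E[2]$ acts freely on $E\times\bP^1$ (translations by nonzero $2$-torsion on $E$ are fixed-point free). For bijectivity, given $\{p,q\}\in\Sym^2 E$, the equation $\{x+v,x-v\}=\{p,q\}$ forces $2x=p+q$, which yields exactly four values of $x$ differing by $E[2]$, with $v$ then determined up to sign (hence $[v]$ determined); the two possible orderings of $\{p,q\}$ yield the same $[v]$. So each fiber of $f$ has size $4$, matching the $G$-orbit size, and $\bar f$ is bijective on closed points. Zariski's Main Theorem then gives that $\bar f$ is an isomorphism.

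The main conceptual step is finding the right map $f$; once it is on the table everything else is a matter of checking equivariance and counting fibers. The only genuine subtlety is reducing the abstract faithful action of $G$ on $\bP^1$ to the specific action coming from $\bP^1=E/[-1]$, and this reduction is exactly what Lemma~\ref{lem_unique} provides for the Klein four-group.
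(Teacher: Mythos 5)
Your proposal is correct and is essentially the paper's argument: both reduce to the standard model of the actions via Lemma~\ref{lem_unique} (with the translation action necessarily being by $E[2]$), and both realize the isomorphism through the map $(x,v)\mapsto(x+v,\,x-v)$. The only cosmetic difference is that the paper works upstairs on $E\times E$, comparing the quotient by $\{t_a\times(t_a\circ[\pm1])\mid a\in E[2]\}$ with the quotient by $\fS_2$, whereas you pass to the model $\bP^1=E/[-1]$ first and verify the induced map is an isomorphism by fiber counting and Zariski's Main Theorem.
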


\begin{proof}
The faithful action of $(\bZ/2)^2$ on $\bP^1$ is unique up to conjugacy (cf.~Lemma \ref{lem_unique}).
Moreover, $E[2] \cong (\bZ/2)^2$ and so the faithful action of $(\bZ/2)^2$ to $E$ by translation is essentially unique.
Hence the quotient  $(E \times \bP^1)/(\bZ/2)^2$ satisfying the above conditions is unique if it exists.
Therefore it is enough to show that $\Sym^2 E$ can be represented as 
the quotient  $(E \times \bP^1)/(\bZ/2)^2$ satisfying the conditions in the statement.

Let $\nu: E \times E \to E$ be a homomorphism defined by $\nu(x,y)=(x+y, x-y)$.
Set 
$$G=\{t_a \times (t_a \circ [\vep]) \in \Aut(E \times E) \mid a \in E[2],\ \vep=\pm 1 \}.$$
Let $\fS_2$ be the second symmetric group that acts on $E \times E$ by permutation.
It is easy to see that $\nu$ induces an isomorphism $\overline{\nu}: (E \times E)/G \to (E \times E)/\fS_2=\Sym^2 E$.

Set $N=\{ 1 \times [\vep] \mid \vep=\pm 1\}$, which is a normal subgroup of $G$.
Then $(E \times E)/N \cong E \times \bP^1$ and $G/N \cong (\bZ/2)^2$.
Clearly $G/N$ acts faithfully on both $E$ and $\bP^1$, so we obtain the desired representation of $\Sym^2 E$.
\end{proof}

By Proposition \ref{prop_candidates} and the above two lemmas,
we complete the ``only if'' part of Theorem \ref{thm_main}.
We already know that the surfaces listed in Theorem \ref{thm_main} are actually finite quotients of abelian surfaces,
except the third class in the list.
So the proof of Theorem \ref{thm_main} is completed once we prove the following: 

\begin{prop}\label{prop_realize}
Let $E$ be an elliptic curve and $L$ is a torsion line bundle of order at most $4$ on $E$.
Then $\bP_E(\cO_E \oplus L)$ is a finite quotient of an abelian surface.
\end{prop}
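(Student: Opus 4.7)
The plan is to construct, for each $n := \ord(L) \in \{1,2,3,4\}$, an abelian surface $A = \tilde E \times F$ and a finite subgroup $\tilde G < \Aut(A)$ whose quotient is $\bP_E(\cO_E \oplus L)$. The strategy is to read Lemma \ref{lem_quotbyZ/n} in reverse and then replace the $\bP^1$-factor by an elliptic curve. Writing $E = \bC/\Lambda'$, one may choose a $\bZ$-basis $v, \mu$ of $\Lambda'$ in which the character defining $L$ takes the normal form $\chi_L(kv + l\mu) = \zeta_n^k$, since every primitive order-$n$ character on $\bZ^2$ is $\mathrm{SL}_2(\bZ)$-equivalent to such a form. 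Setting $\Lambda := \langle nv, \mu\rangle$, $\tilde E := \bC/\Lambda$, and letting $a \in \tilde E$ be the class of $v$, Lemma \ref{lem_quotbyZ/n} identifies $\bP_E(\cO_E \oplus L)$ with $(\tilde E \times \bP^1)/\langle t_a \times \rho\rangle$, where $\rho \in \Aut(\bP^1)$ denotes the $\zeta_n$-rotation.

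It remains to realize $\bP^1$ as $F/\langle[-1]\rangle$ for an appropriate elliptic curve $F$ and to lift $\rho$ to an element of $\Aut(F)$ which commutes with $[-1]$ and whose image in $\Aut(F)/\langle[-1]\rangle$ still has order $n$. Setting $N := 1 \times \langle[-1]\rangle \subset \Aut(\tilde E \times F)$, I would take $\tilde G = \langle N,\ t_a \times h_n\rangle$ with the following choices: for $n=1$, $F$ arbitrary and $\tilde G = N$; for $n=2$, $F$ arbitrary and $h_2 = t_b$ for any nonzero $b \in F[2]$; for $n=3$, $F = \bC/\bZ[\omega]$ and $h_3 = [\omega]$; for $n=4$, $F = \bC/\bZ[i]$ and $h_4 = t_c \circ [i]$ for any $c \in F[2]$ with $(1+i)c \neq 0$ in $F$, say $c = 1/2$. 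Routine computations confirm in each case that $\tilde G$ is a finite subgroup of the expected size, that the generators commute (the hypothesis $b, c \in F[2]$ being exactly what makes $t_b$ and $t_c \circ [i]$ commute with $[-1]$), and that $h_n$ descends to an order-$n$ element of $\Aut(\bP^1)$. By Lemma \ref{lem_unique} this descended action is the $\zeta_n$-rotation up to conjugacy, so Lemma \ref{lem_quotbyZ/n} applied to the induced $\bZ/n$-action on $\tilde E \times \bP^1$ gives $A/\tilde G \cong \bP_E(\cO_E \oplus L)$; the fact that one recovers the given $L$, rather than some other torsion line bundle of order $n$, is forced by the initial choice of basis $v, \mu$.

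The main obstacle is the case $n=4$. The naive candidate $h_4 = [i]$ yields a legitimate order-$4$ element of $\Aut(A)$, but $[i]^2 = [-1]$ already belongs to $N$, so the induced automorphism of $\bP^1$ has order only $2$; in fact $\Aut_\grp(F)/\langle[-1]\rangle$ has order at most $3$ for every elliptic curve $F$, which rules out realizing the $\bZ/4$-action on $\bP^1$ by a pure group automorphism and forces the glide form $h_4 = t_c \circ [i]$. A short calculation gives $(t_c \circ [i])^2 = t_{(1+i)c} \circ [-1]$, which is congruent to the identity modulo $N$ precisely when $(1+i)c = 0$ in $F$, i.e.\ when $c \in \{0, (1+i)/2\}$; excluding these two values while keeping $c \in F[2]$ leaves the two admissible choices $c = 1/2$ and $c = i/2$, and for either of them $h_4$ descends to an order-$4$ automorphism of $\bP^1$ while still commuting with $[-1]$.
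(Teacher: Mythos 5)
Your proof is correct and follows essentially the same route as the paper: normalize the character of $L$ to present $\bP_E(\cO_E \oplus L)$ as $(\tilde E \times \bP^1)/(\bZ/n)$ (the content of Lemma \ref{lem_conv} fed through Lemma \ref{lem_quotbyZ/n}), then lift $\bP^1$ to an elliptic curve $F$ and the $\bZ/n$-action to $\Aut(\tilde E \times F)$, invoking Lemma \ref{lem_unique} to match the two actions. The only substantive difference is in the explicit lifts for $n=2$ and $n=4$: the paper circumvents the order-$4$ obstruction on $\bP^1 = F/\langle [-1]\rangle$ by adjoining a square root of $t_a$ on the $\tilde E$-side, whereas your glide $t_c \circ [i]$ on the $F$-side works equally well.
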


\begin{lem}\label{lem_conv}
Let $E$ be an elliptic curve and $L$ a torsion line bundle of order $n$.
Then there exists an elliptic curve $E'$ and a torsion point $a \in E'$ of order $n$ such that 
$E=E'/\langle a \rangle$ and the pull-back of $L$ to $E'$ is trivial.
\end{lem}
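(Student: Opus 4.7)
The plan is essentially to invert the construction carried out in the proof of Lemma \ref{lem_quotbyZ/n}. There we started with an elliptic curve $E'$, a point $a\in E'$ of order $n$, and produced a character on the sublattice $\Lambda'$ of $E'=\bC/\Lambda'$ whose associated line bundle $L$ on $E=E'/\langle a\rangle$ has order $n$ and pulls back to $\cO_{E'}$. Here we run that process backwards: given $(E,L)$, we first extract a character, take its kernel to build the cover $E'\to E$, and then read off the torsion point $a$.

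More precisely, I would write $E=\bC/\Lambda$, where $\Lambda\subset \bC$ is a lattice. Since $L$ is a torsion line bundle, it lies in $\Pic^0(E)$, and by the Appell--Humbert correspondence it is determined by a unitary character $\chi\colon \Lambda\to \bC_1$. The hypothesis that $L$ has order exactly $n$ in $\Pic^0(E)$ translates to the statement that $\chi$ has order exactly $n$ as a character. Set
\[
\Lambda'=\ker\chi\subset \Lambda.
\]
Because $\chi$ takes values in the cyclic group of $n$-th roots of unity and is surjective onto it, $\Lambda'$ is a sublattice of index $n$ in $\Lambda$, and the quotient $\Lambda/\Lambda'$ is cyclic of order $n$.

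Now define $E'=\bC/\Lambda'$, and let $q\colon E'\to E$ be the natural isogeny induced by $\id_\bC$. Its kernel is $\Lambda/\Lambda'$, which is cyclic of order $n$, so we may pick a generator $a\in E'$, automatically a torsion point of order $n$, and then $E=E'/\langle a\rangle$ as required. Finally, the pull-back $q^*L$ corresponds under Appell--Humbert (applied on $E'$) to the restriction $\chi|_{\Lambda'}$, which is trivial by the very definition of $\Lambda'$. Hence $q^*L\cong \cO_{E'}$.

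There is no real obstacle here: the only point that needs a moment of care is the bookkeeping that the order of $\chi$ as a character of $\Lambda$ agrees with the order of $L$ in $\Pic^0(E)$, so that $\Lambda/\Lambda'$ is cyclic of order exactly $n$ and $a$ has order exactly $n$. This is immediate from the injectivity of the Appell--Humbert map $\Hom(\Lambda,\bC_1)/(\text{trivial summand})\to \Pic^0(E)$ restricted to finite-order characters, together with the observation that $\chi$ has order $n$ in $\Hom(\Lambda,\bC_1)$ if and only if the image of $\chi$ in $\bC_1$ is the group of $n$-th roots of unity.
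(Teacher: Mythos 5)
Your proof is correct and takes essentially the same route as the paper: both represent $L$ by a unitary character $\chi\colon\Lambda\to\bC_1$ of order $n$ and pass to the index-$n$ sublattice on which $\chi$ is trivial (the paper constructs it via an adapted basis $\langle n\lambda_1,\lambda_2'\rangle$, which is exactly $\ker\chi$). Your observation that $\Lambda/\ker\chi$ embeds in $\bC_1$ and is therefore cyclic of order $n$ cleanly replaces the paper's basis bookkeeping.
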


\begin{proof}
Set $E=\bC/\Lambda$.
Then $L$ is defined by a character $\chi: \Lambda \to \bC_1$ or order $n$.
Set $\Lambda=\langle \lambda_1, \lambda_2 \rangle$ and $\chi(\lambda_i)=\zeta_i$.
Now $\zeta_1, \zeta_2$ are $n$-th root of unity and one of them is primitive.
So we may assume that $\zeta_1$ is a primitive $n$-th root of unity.
Then $\zeta_2=\zeta_1^k$ for some $k \in \bZ$.
Set $\lambda_2'=\lambda_2-k\lambda_1$,
then $\Lambda=\langle \lambda_1, \lambda_2' \rangle$ and $\chi(\lambda_2')=1$.

Set $\Lambda'=\langle n\lambda_1, \lambda_2' \rangle \subset \Lambda$.
Then the identity map $\id_{\bC}: \bC \to \bC$ induces $q: E'=\bC/\Lambda' \to E$.
Let $a \in E'$ be a point defined by $\lambda_1 \in \bC$, then $a$ is a torsion point of order $n$ and $E=E'/\langle a \rangle$.
Moreover $q^*L=\cO_{E'}$ since $\chi|_{\Lambda'}=\id_{\Lambda'}$.
\end{proof}

\begin{lem}\label{lem_intermediate}
Let $E$ be an elliptic curve and $a \in E$ a torsion point of order $n \le 4$.
Then there exists an elliptic curve $F$, a finite subgroup $G < \Aut(E \times F)$ and a normal subgroup $N \lhd G$ 
such that 
\begin{itemize}
\item
$(E \times F)/N  \cong E \times \bP^1$.
\item
$G/N \cong \bZ/n$.
\item
$G/N$ acts faithfully on $E$  by translation by $a$.
\item
$G/N$ acts faithfully on $\bP^1$.
\end{itemize}
\end{lem}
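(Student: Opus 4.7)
The plan is to construct $F$, $G$, and $N$ explicitly by cases on $n\in\{1,2,3,4\}$. The case $n=1$ is trivial: take $F=E$ and $N=G=\{1\}\times\langle[-1]\rangle$, so that $(E\times F)/N\cong E\times\bP^1$ and $G/N$ is the trivial group, which vacuously satisfies the faithfulness conditions. For $n=2,3,4$, I would fix an elliptic curve $F$ carrying a group automorphism $\zeta\in\Aut_{\grp}(F)$ of order $n$: specifically, $\zeta=[-1]$ on any $F$ when $n=2$; $\zeta=\omega\cdot$ on $F=\bC/\bZ[\omega]$ when $n=3$; and $\zeta=i\cdot$ on $F=\bC/\bZ[i]$ when $n=4$. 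The general recipe is then to choose a subgroup $N_0\subset\Aut(F)$ with $F/N_0\cong\bP^1$ and an element $h\in\Aut(F)$ normalizing $N_0$, set $N=\{1\}\times N_0$ and $G=\langle N,\,t_a\times h\rangle$, and verify the four bullets; the key point to arrange is that the descent $\bar h$ of $h$ to $\bP^1=F/N_0$ has order exactly $n$, which then forces $G/N\cong\bZ/n$ with a faithful action on $\bP^1$.

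For $n=2$ and $n=3$ I take $N_0=\langle\zeta\rangle$ and $h=t_b$, where $b\in F$ is a fixed point of $\zeta$ of order exactly $n$. Such a $b$ exists because $\Fix(\zeta)\subset F$ is a subgroup isomorphic to $\bZ/n$ in these cases: it is $F[2]$ when $n=2$ and $F[1-\omega]$ when $n=3$, of sizes $4$ and $3$ respectively. Since $\zeta$ fixes $b$, a short calculation shows $t_b$ and $\zeta$ commute, so $N$ is normal in $G$ and $G$ is abelian. Because $\bar t_b$ on $\bP^1=F/\langle\zeta\rangle$ has order equal to $\ord(b)=n$ (noting that $\bar t_b=\id$ would force $t_b\in\langle\zeta\rangle$, and hence $t_b=\id_F$ as $\zeta$ is not a translation), the image of $t_a\times t_b$ in $G/N$ has order $n$. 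Combined with the obvious computation of the kernels, this yields $G/N\cong\bZ/n$ acting as required.

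The case $n=4$ is the main obstacle. Taking $N_0=\langle\zeta\rangle$ forces any normalizing $h=t_b\circ\zeta^k$ to satisfy $b\in\Fix(\zeta)=F[i-1]$, a group of order only $2$, whereas we need the descent $\bar h$ to have order $4$ on $\bP^1$. My workaround is to shrink $N_0$ and use a genuinely non-translation lift: set $N_0=\langle\zeta^2\rangle=\langle[-1]\rangle$ and $h=t_{1/2}\circ\zeta$, i.e., $h(x)=ix+1/2$ on $F=\bC/\bZ[i]$. A direct computation gives $h^2=t_{(1+i)/2}\circ[-1]$ and $h^4=\id_F$, so $h$ has order $4$ in $\Aut(F)$ and commutes with $[-1]$. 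The descent $\bar h$ to $\bP^1=F/\langle[-1]\rangle$ then satisfies $\bar h^2=\bar t_{(1+i)/2}$, which is nontrivial since $(1+i)/2$ is a nonzero $2$-torsion point of $F$, whence $\bar h$ still has order $4$.

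With $G=\langle N,\,t_a\times h\rangle$ in this last case, the normality of $N$ in $G$ follows from $h$ commuting with $[-1]$. A power $(t_a\times h)^k=t_{ka}\times h^k$ lies in $N$ iff $ka=0$ and $h^k\in\langle[-1]\rangle$; the first condition already forces $4\mid k$, and for such $k$ the second holds automatically since $h^4=\id$. Hence $G/N\cong\bZ/4$, its induced action on $E$ is translation by $a$ (faithful of order $4$), and its induced action on $\bP^1$ is via $\bar h$ of order $4$ (hence faithful). This completes the construction in all four cases.
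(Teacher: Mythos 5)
Your construction is correct in all four cases, and while it follows the same overall template as the paper (an explicit case-by-case choice of $F$, $G$, $N$ for $n=1,2,3,4$, with $n=1$ handled identically), the groups you pick are genuinely different. For $n=2,3$ the paper takes the cyclic group $G=\langle t_a\times\vph\rangle$ with $\vph$ a group automorphism of order $2n$ and $N=\langle 1\times\vph^n\rangle=\langle 1\times[-1]\rangle$, so that $\vph$ descends to an order-$n$ automorphism of $F/\langle[-1]\rangle$; you instead take $G\cong(\bZ/n)^2$ generated by $1\times\zeta$ and $t_a\times t_b$ with $b$ a nonzero $\zeta$-fixed point and $N=1\times\langle\zeta\rangle$, which for $n=2$ has the small advantage of working for every $F$ rather than only the Gaussian curve. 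The real divergence is at $n=4$: the paper sets $G=\langle t_a\times t_b,\ t_{a_0}\times\vph\rangle$ with $2a_0=a$, but since $\ord(a)=4$ one has $4a_0=2a\neq 0$, so $t_{a_0}\times\vph$ has order $8$, $G\not\cong\bZ/2\times\bZ/4$, and $G/N$ acts on $E$ through translation by $a_0$ rather than by $a$ --- as written that construction does not satisfy the bullets. Your single generator $t_a\times h$ with $h=t_{1/2}\circ(i\,\cdot)$ over $N=1\times\langle[-1]\rangle$ cleanly produces $G/N\cong\bZ/4$ acting on $E$ by $t_a$ and on $\bP^1$ by an element of exact order $4$, the key computation being that $h^2=t_{(1+i)/2}\circ[-1]$ descends to the nontrivial involution $\overline{t_{(1+i)/2}}$ because $t_{(1+i)/2}\notin\langle[-1]\rangle$. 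So your argument not only proves the lemma but also repairs the one step of the paper's proof that is problematic; the paper's approach, where it works, buys slightly smaller (cyclic) groups $G$.
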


\begin{proof}
If $n=1$, then $G=N=\{ 1 \times 1, 1 \times [-1]\}$ satisfies the claim.

Assume $n=2$ or $3$.
Take an elliptic curve $F$ with a group automorphism $\vph$ of order $2n$ and set 
$G=\langle t_a \times \vph \rangle$, $N=\langle 1 \times \vph^n \rangle$.
Then the required conditions are satisfied.

Assume $n=4$.
Take an elliptic curve $F$ with a group automorphism $\vph$ of order $4$, 
$a_0 \in E$ with $2a_0=a$, and a torsion point $b \in F$ of order $2$.
Set
$$G=\langle t_a \times t_b,\ t_{a_0} \times \vph \rangle,\ N=\Ker(G \to \Aut(E)).$$
Then $G = \langle t_a \times t_b \rangle \times \langle t_{a_0} \times \vph \rangle \cong \bZ/2 \times \bZ/4$ 
and $N=\langle 1 \times (t_b \circ \vph^2) \rangle$.
It is easy to see that the required conditions are satisfied.
\end{proof}

\begin{proof}[Proof of Proposition {\ref{prop_realize}}]
Lemma \ref{lem_conv} implies that 
there exists an elliptic curve $E'$ and a torsion point $a \in E'$ of order $n$ such that 
$E=E'/\langle a \rangle$ and the pull-back of $L$ to $E'$ is trivial.
So $\bP_E(\cO_E \oplus L)$ is the quotient of $E' \times \bP^1$ by a finite group $G \cong \bZ/n$ such that 
\begin{itemize}
\item
$G$ acts faithfully on $E'$  by the translation by $a$.
\item
$G$ acts faithfully on $\bP^1$.
\end{itemize}
On the other hand, Lemma \ref{lem_intermediate} implies that 
there exists an elliptic curve $F$, a finite group $G' < \Aut(E' \times F)$ and a normal subgroup $N \lhd G'$ 
such that 
\begin{itemize}
\item
$(E' \times F)/N  \cong E' \times \bP^1$.
\item
$G'/N \cong \bZ/n$.
\item
$G'/N$ acts faithfully on $E'$ by the translation by $a$.
\item
$G'/N$ acts faithfully on $\bP^1$.
\end{itemize}
Since the faithful action of $\bZ/n$ on $\bP^1$ is unique up to conjugacy (cf.~Lemma \ref{lem_unique}), 
the actions of $G$ and $G'/N$ on $E' \times \bP^1$ are conjugate to each other.
So we have 
$$(E' \times F)/G' \cong (E' \times \bP^1)/(G'/N) \cong (E' \times \bP^1)/G \cong \bP_E(\cO_E \oplus L).\qedhere$$
\end{proof}


\end{document}